\newtheorem{thm}{Theorem}[section]
\newtheorem{pro}[thm]{Proposition}
\newtheorem{lem}[thm]{Lemma}
\newtheorem{cor}[thm]{Corollary}
\theoremstyle{definition}
\newtheorem*{ackn}{Acknowledgements}
\theoremstyle{remark}
\newtheorem{rmk}[thm]{Remark}
\newtheorem{exa}[thm]{Example}
\newtheorem*{claim}{Claim}
\newtheorem{prb}[thm]{Problem}
\numberwithin{equation}{section}
\def\N{\mathbb{N}}
\def\R{\mathscr{R}}
\def\L{\mathscr{L}}
\def\es{\varnothing}
\def\Lra{\Leftrightarrow}
\def\mm{\mathfrak{m}}
\def\ol#1{\overline{#1}}
\def\pre#1#2{\langle{#1}\,|\,{#2}\rangle}
 \DeclareMathOperator\Inv{Inv} \DeclareMathOperator\Gp{Gp} 
  \DeclareMathOperator\red{red}
\DeclareMathOperator\Cay{Cay}
\begin{document}


\title[On Special Inverse Monoids with the Strong $F$-Inverse Property]{On Special Inverse Monoids with \\ the Strong $F$-Inverse Property} 


\author{IGOR DOLINKA}

\address{Department of Mathematics and Informatics, University of Novi Sad, Trg Dositeja Obra\-do\-vi\-\'ca 4,
21101 Novi Sad, Serbia}

\email{dockie@dmi.uns.ac.rs}

\author{GANNA KUDRYAVTSEVA}

\address{Faculty of Mathematics and Physics, University of Ljubljana, Jadranska ulica 19, SI-1000 Ljubljana, Slovenia / Institute of Mathematics, Physics and Mechanics, 
Jadranska ulica 19, SI-1000 Ljubljana, Slovenia}

\email{ganna.kudryavtseva@fmf.uni-lj.si}


\begin{abstract}
An inverse monoid $S$ is called $F$-inverse if each $\sigma$-class of $S$, where $\sigma$ is the minimum group congruence of $S$, has a maximum element with respect
to the natural order of $S$. Since the property of an inverse monoid being $F$-inverse immediately implies that it must be $E$-unitary, it follows that every 
$X$-generated $F$-inverse monoid with canonical maximum group image $G$ must be isomorphic to a quotient of the Margolis-Meakin expansion $M(G,X)$. If this is
realised in such a way that all the maximal elements of each $\sigma$-class of $M(G,X)$ get identified, thus producing the top element of the corresponding 
$\sigma$-class of $S$, we say that $S$ is strongly $F$-inverse. Consequently, there is a universal $X$-generated inverse monoid $M_{sF}(G,X)$ with maximum group 
image $G$ and the strongly $F$-inverse property. We provide a presentation for this inverse monoid and show it can be further simplified upon introducing additional
assumptions on the group $G$ (which will include all one-relator groups). We use this to provide a full description of all one-relator special inverse monoids with
a cyclically reduced relator word that are strongly $F$-inverse. We also discuss some further examples and non-examples.
\end{abstract}


\subjclass[2010]{20M05; 20M18, 20F05}


\keywords{Inverse monoid; $F$-inverse monoid; Special inverse monoid; Margolis-Meakin expansion; Cyclically reduced word; Invertible pieces}


\maketitle 


\section{Introduction} \label{sec:intro}

If $S$ is a semigroup or a monoid and $a\in S$ we say that $a'\in S$ is a \emph{(von Neumann) inverse} of $a$ if $aa'a=a$ and $a'=a'aa'$. If every element 
of a semigroup has an inverse such a semigroup is called \emph{regular}; regular semigroups in which the inverse of every element is unique are called
\emph{inverse semigroups}. In such a case, this unique inverse is usually denoted by $a^{-1}$ and the mapping $a\mapsto a^{-1}$ readily becomes an additional
unary operation of the underlying semigroup. Indeed, it is very convenient to consider inverse semigroups/monoids in this enriched signature because it is
well known that the class of inverse semigroups in this signature forms a variety, defined by the identical laws $(xy)^{-1}=y^{-1}x^{-1}$, $(x^{-1})^{-1}=x$, 
$xx^{-1}x=x$ and $xx^{-1}yy^{-1}=yy^{-1}xx^{-1}$, with the latter effectively telling us that idempotent elements commute and thus form a semilattice.
Inverse semigroups have nowadays a rich and developed theory \cite{Law,Pet}, forming a bridge between groups and semigroups \cite{Mea}, and making crucial 
appearances in many mathematical areas such as geometry, topology, category theory, and even outside mathematics, e.g.\ in the formal structural 
analysis in music theory \cite{BJM,Janin}. Also, inverse monoids are today an indispensable tool in studying the word problem for one-relator monoids \cite{CF}.
The question of whether every such monoid has an algorithmically soluble word problem amazingly still stands open even though the analogous problem for 
one-relator groups was famously solved in the positive by Magnus \cite{Ma1,Ma2} almost a century ago. It was discovered in \cite{IMM} that (special) inverse 
monoids and their word problem constitute one viable angle of attack to this question, although the major paper \cite{Gr-Inv} points towards the limitations 
of such an approach and necessary care in its applications.

Every inverse semigroup carries a natural partial order defined by 
$$
x\leq y \text{\ \ \ if and only if\ \ \ } x=xx^{-1}y
$$ 
(the latter being equivalent to an array of similar equivalent conditions). This order is compatible both with multiplication and the inversion operations. 
On the other hand, it is easy to show that the intersection of any family of congruences $\rho$ of the inverse semigroup/monoid $S$ with the property that 
$S/\rho$ is a group again has the same property, and hence there is a \emph{minimum group congruence} $\sigma$ on $S$, so that $S/\sigma$ is the \emph{maximum 
group homomorphic image} of $S$. Clearly, we may consider the restrictions of the order $\leq$ to each $\sigma$-class of $S$. If it turns out that each such 
$\sigma$-class in an inverse monoid $S$ has a maximum element then we say that $S$ is \emph{$F$-inverse}. An inverse monoid $S$ is called \emph{$E$-unitary} 
if the $\sigma$-class of the identity element consists precisely of the idempotents of $S$; now, $a\,\sigma\,1$ and $a\leq 1$ imply that $a$ is an idempotent, 
and so every $F$-inverse inverse monoid must necessarily be $E$-unitary. The $E$-unitary property is ubiquitous in inverse semigroup theory; loosely speaking, 
the $E$-unitary inverse monoids are the ones that are considered, in a way, ``well-behaved'', more closely related to groups than those lacking this property.

Generally, there is a recent surge of interest in $F$-inverse semigroups and monoids. Initially, it was fueled by the influential paper \cite{HR} of Henckell 
and Rhodes who conjectured that every finite inverse monoid $M$ has a finite $F$-inverse cover, that is, there is an idempotent separating homomorphism from 
a finite $F$-inverse monoid onto $M$. This conjecture was confirmed only very recently by Auinger, Bitterlich and Otto in \cite{ABO}. Also, there is a separate
strand of research stemming from the 2018 remark of M.\ Kinyon \cite{Kinyon} that upon introducing an additional unary operation $a\mapsto a^\mm$, which maps 
an element $a$ of an $F$-inverse monoid to the maximum element in the $\sigma$-class of $a$, the class of $F$-inverse monoids becomes a variety in this enriched 
signature. We refer to \cite{AKSz,KLF,Nora} as a sampler of the recent contributions in this vein. Finally, we mention the quite recent paper of Kambites and 
Szak\'acs \cite{KSz} which amply demonstrates that the property of an inverse monoid $S$ being $F$-inverse is, at its core, a geometric property of the Cayley 
graph of the greatest group image $G$ of $S$, or, put more precisely, a particular way in which the geometries of the Cayley graph of $G$ and 
the Sch\"utzenberger graphs of $S$ interact. 

In their seminal paper \cite{MM89}, Margolis and Meakin construct an inverse monoid $M(G,X)$ starting from an $X$-generated group $G$, consisting of finite 
connected pointed subgraphs of the Cayley graph of $G$ with respect to $X$. This is what later became known as the \emph{Margolis-Meakin expansion} of $G$, and 
it has the following remarkable property \cite[Theorem 2.2]{MM89}: it is the universal $X$-generated $E$-unitary inverse monoid with (canonical) maximum group 
image $G$. As it turns out, the maximal elements in the $\sigma$-class of $M(G,X)$ mapping onto $g\in G$ correspond precisely to simple paths $1\leadsto g$ in the 
Cayley graph of $G$. On the other hand, if $S$ is an $X$-generated $F$-inverse monoid such that $S/\sigma\cong G$ then there is a canonical morphism $\phi:M(G,X)\to S$,
and it respects the natural order and the $\sigma$-classes. So, the images of simple paths $1\leadsto g$ in $M(G,X)$ are mapped onto a partially ordered set 
of elements of the corresponding $\sigma$-class of $S$ such that (at least) one such image is, in the natural order of $S$, above all the other such images; 
this exactly captures the condition that $S$ is $F$-inverse. In this paper, we are interested in the situation when the $F$-inverse property of $S$ arises 
in a specific way, when \emph{all} maximal elements of a $\sigma$-class of $M(G,X)$ are mapped via $\phi$ to the unique top element 
of the corresponding $\sigma$-class of $S$. If such a situation occurs we say that the inverse monoid $S$ is \emph{strongly $F$-inverse}.

It this thus immediately clear that there exists a universal $X$-generated inverse monoid with maximum group image $G$ and the strongly $F$-inverse property, 
which we denote by $M_{sF}(G,X)$: it suffices to consider the quotient of $M(G,X)$ by the congruence identifying all simple paths from $1$ to a fixed element 
of $G$. After the next, preliminary section, in Section \ref{sec:univ} we aim to determine a manageable inverse monoid presentation for this object, one that 
is suitable for applications. We also provide a geometric model for $M_{sF}(G,X)$ based on a suitable closure operator on certain subgraphs of $\Cay(G,X)$. When the
group $G$ admits a presentation in which every relator word defines a simple cycle in its Cayley graph (by \cite{W2} this includes all one-relator groups),
we simplify this presentation further in Section \ref{sec:sims}. We then use the obtained information to provide a characterisation of all one-relator special inverse 
monoids $\Inv\pre{X}{w=1}$ with a cyclically reduced relator word $w$ that are strongly $F$-inverse. In the concluding section we discuss, in light of \cite{KSz}, 
special inverse monoids with a cyclically reduced relator word that are not $F$-inverse, and also examples of such monoids that are $F$-inverse but not strongly.
We finish with some open questions.


\section{Preliminaries} \label{sec:prelim}

In an attempt to keep this article reasonably self-contained, in this section we gather a ``toolbox'' of some concepts and prerequisites needed for our exposition. 
For other, more basic material, we are going to provide appropriate references.

\subsection{Basics of inverse semigroups} \label{subsec:basic}

A fundamental tool in studying the structure of semigroups are five equivalence relations called \emph{Green's relations}; here we define two of them. Namely, 
for a semigroup $S$ we write:
$$
a\,\R\,b \Lra aS^1=bS^1, \quad a\,\L\,b \Lra S^1a=S^1b,
$$
where $S^1$ denotes $S\cup\{1\}$ unless $S$ is already a monoid. In regular semigroups, each $\R$-class and each $\L$-class contains at least one idempotent.
The subclass of inverse semigroups is equivalently defined by the condition that each $\R$-class and each $\L$-class contains \emph{precisely one} idempotent,
namely, for all $a\in S$, where $S$ is an inverse semigroup, we have
$$ d(a)=aa^{-1}\;\R\; a \;\L\; r(a)=a^{-1}a.$$
Consequently, $a\,\R\,b$ if and only if $aa^{-1}=bb^{-1}$ and also $a\,\L\,b$ if and only if $a^{-1}a=b^{-1}b$. An element $a$ of an inverse monoid $S$ is called 
\emph{right invertible} (or a \emph{right unit}) if $d(a)=1$; similarly, $a\in S$ is \emph{left invertible} if $r(a)=1$. An element $a\in S$ is \emph{invertible} 
(or a \emph{unit}) if it is both right and left invertible. Invertible elements of an inverse monoid $S$ form its subgroup, the \emph{group of units} $U_S$.
 
Similarly as groups arose from the abstraction of permutations and semigroups from that of transformations of a set, inverse semigroups are abstract models
of partial injective maps on a set. Therefore, a prime exaple of an inverse semigroup (and, in fact, an inverse monoid) is the \emph{symmetric inverse monoid}  
$\mathcal{I}_A$ on $A$, consisting of of all partial injective mappings on $A$ with the usual operations of composition of partial mappings and of taking inverse 
of a partial injection. Unlike the morphisms in this paper (which are composed from right to left and written on the left of their arguments), it is customary 
to compose mappings in $\mathcal{I}_A$ from left to right. In particular, if $\alpha\in\mathcal{I}_A$ we have that $d(\alpha)=\alpha\alpha^{-1}$ 
is the identity mapping on the \emph{domain} of $\alpha$, and $r(\alpha)=\alpha^{-1}\alpha$ is the identity mapping on the \emph{range} of $\alpha$ (thus explaining 
the notation for these idempotents). In general, the idempotents of $\mathcal{I}_A$ are exhausted by the identity mappings on subsets of $A$, and consequently 
$\alpha\,\R\,\beta$ if and only if $\alpha,\beta$ have the same domain, while $\alpha\,\L\,\beta$ if and only if $\alpha,\beta$ have the same range (image).
By the \emph{Wagner-Preston theorem}, every inverse monoid/semigroup $S$ is isomorphic to an inverse submonoid/subsemigroup of $\mathcal{I}_S$.

For further background in semigroup theory we refer to \cite{CP,How,Law,Pet}.

\subsection{Inverse monoid presentations, invertible pieces} \label{subsec:inv}

If $X\neq\es$ is an alphabet, for the sake of representing and considering elements of groups, inverse monoids, and involution semigroups in general, it is useful 
to introduce the ``doubled'' alphabet $\ol{X}=X\cup X^{-1}$, where $X^{-1}$ is a copy of $X$ disjoint from $X$. Then $\ol{X}^*$ is the free involution monoid, 
meaning that every map $\iota:X\to S$ where $S$ is an involution monoid uniquely extends to a homomorphism of involution monoids $\ol{X}^* \to S$. If $\iota(X)$ 
generates $S$ we say that (the involution monoid\;/\; inverse monoid\;/\; group) $S$ is \emph{$X$-generated} (via $\iota$, which is in general not required to be 
injective). In such a case, every element of $S$ is represented by some word $w\in\ol{X}^*$; to distinguish between words as such and elements of $S$ represented 
by them we write $[w]_S$ for the element of $S$ represented by $w$, or just $[w]$ if the structure in question is clear from the context. For two $X$-generated 
structures $S_1,S_2$ via $\iota_1,\iota_2$, respectively, we say that a morphism $\psi:S_1\to S_2$ is \emph{canonical} if it respects the generators, that is 
if $\iota_2=\psi\iota_1$.

A word $w\in\ol{X}^*$ is \emph{reduced} if it does not contain a subword of the form $xx^{-1}$, $x^{-1}x$, $x\in X$. Otherwise, if a word $w$ is not reduced, it 
is easy to show that the process of successively removing subwords of the indicated form from $w$ is confluent and thus terminates by the unique \emph{reduced form} 
$\red(w)$ of $w$. As is well-known, the collection of all reduced words over $\ol{X}$ forms the \emph{free group} $FG(X)$ on $X$ under the operations 
$u\cdot v=\red(uv)$ and $u^{-1}$ obtained from $u$ by reversing the order of letters and then applying $^{-1}$ to each of them. A word $w\in\ol{X}^*$ is 
\emph{cyclically reduced} if it is reduced and the first and the last letters of $w$ are not mutually inverse.

Since inverse monoids form a variety, \emph{free inverse monoids} $FIM(X)$ exist for all alphabets $X$. This is the quotient of $\ol{X}^*$ by the 
so-called \emph{Wagner congruence}, generated by the pairs $(uu^{-1}u,u)$ and $(uu^{-1}vv^{-1},vv^{-1}uu^{-1})$ for all $u,v\in \ol{X}^*$. There is an elegant 
geometric model for $FIM(X)$, due to Munn \cite{Munn} and Scheiblich \cite{Sch}, which we will see a bit later, in Subsection \ref{subsec:mgx}.

Paralleling the notion of a \emph{group presentation} \cite{LSch} $\Gp\pre{X}{w_i=1\; (i\in I)}$ defining a quotient of the free group $FG(X)$ by the normal subgroup
generated by the words by $w_i$, $i\in I$, called \emph{relators} (usually assumed to be cyclically reduced), there is also the notion of an inverse monoid
defined by a presentation. Namely, we write
$$ M=\Inv\pre{X}{u_i=v_i\; (i\in I)} $$
if $M$ is isomorphic to the quotient of $FIM(X)$ by its congruence generated by $\{(u_i,v_i):\ i\in I\}$. Note that, unlike for groups, in the environment of 
inverse monoid presentations we are neither allowed to cancel adjacent mutually inverse letters or words nor is the \emph{defining relation} $u_i=v_i$ equivalent 
to $u_iv_i^{-1}=1$. In the case when $M$ admits a presentation by relations in which one of the words involved is the empty word $1$ we say that $M$ is \emph{special}. 
A group or an inverse monoid is \emph{finitely presented} if it admits a presentation by finitely many generators and relations. 

We refer to \cite{Stephen} for the basic notions and techniques related to the theory of inverse monoid presentations, and to \cite{LSch} for further background in
combinatorial group theory.

Now let $M=\Inv\pre{X}{w_i=1\; (i\in I)}$ be a special inverse monoid. Then, for every word $w_i$, $i\in I$, there is a factorisation
$$
w_i = w_{i,1}\cdots w_{i,k_i}
$$
with the properties that (i) each subword $w_{i,j}$ represents an invertible element of $M$, and (ii) no proper prefix or proper suffix of $w_{i,j}$ represents 
an invertible element of $M$. In this sense we speak about the \emph{finest} factorisation of $w_i$ into subwords representing invertible elements of $M$, 
which are called the \emph{minimal invertible pieces} or simply the \emph{pieces} of $M$. 
It follows from \cite[Proposition 4.2]{IMM} that elements represented by the  pieces 
of $M$ generate $U_M$, the group of units of $M$. The recent paper \cite{GR} goes further by describing presentations for such groups of units. We also refer 
to the same paper for an extensive discussion of many contrasts regarding the pieces and properties of groups of units with respect to ordinary monoid presentations 
and special monoids. In particular, while there is the \emph{Adyan overlap algorithm} \cite{DG21,GR,CF} for computing the pieces of finitely presented special monoids, 
it is not known if such an algorithm even exists for finitely presented special inverse monoids. 

\subsection{Graphs and Cayley graphs} \label{subsec:graphs}

In this paper all graphs considered are labelled digraphs $\Gamma=(V,E,\ell)$, where $V$ is the set of \emph{vertices}, $E$ is the set of \emph{edges} such that each 
edge $e$ has an initial vertex $\alpha(e)\in V$ and a terminal vertex $\omega(e)\in V$, while $\ell:E\to Y$ is a labelling function (the labels coming from a set $Y$). 
A \emph{path} is a non-empty sequence $e_1,\dots,e_m$ of consecutive edges ($\omega(e_j)=\alpha(e_{j+1})$ for all $1\leq j<m$) or just a vertex $v\in V$ considered
as the empty path around $v$. A path is \emph{simple} if all vertices involved in it, namely $\alpha(e_1),\dots,\alpha(e_m),\omega(e_m)$, are distinct. A path
in which $\omega(e_m)=\alpha(e_1)$ is called a cycle (\emph{based} at $\alpha(e_1)$); it is a \emph{simple cycle} if all initial vertices of its edges are distinct. 
By a \emph{subgraph} of $\Gamma$ we mean a subset of $V\cup E$ closed under $\alpha$ and $\omega$; the subgraph \emph{spanned} by a path consists of all edges of the 
path and all initial and terminal vertices of these edges. For a path $p=e_1\dots e_m$ we define $\alpha(p)=\alpha(e_1)$ and $\omega(p)=\omega(e_m)$. Two paths 
$p$ and $q$ can be composed if $\omega(p)=\alpha(q)$ in which case the resulting path arises as the concatenation of two sequences of edges. Two paths $p,q$ are 
called \emph{coterminal} if $\alpha(p)=\alpha(q)$ and $\omega(p)=\omega(q)$. A (sub)graph is \emph{strongly connected} if for any two vertices $u,v$ involved 
there is a (simple) path $p$ such that $\alpha(p)=u$ and $\omega(p)=v$.

A typical example of a labelled digraph in the sense just presented is the \emph{(right) Cayley graph} $\Cay(G,X)$ of an $X$-generated group $G$. Here the vertices 
are all the elements of the group $G$, the set of labels is $\ol{X}$, and for each $x\in \ol{X}$ there is an edge from $g\in G$ to $g[x]$ labelled by $x$. Notice that every such 
edge $e$ has an \emph{inverse} edge $e^{-1}$ so that $\alpha(e^{-1})=\omega(e)$ and $\omega(e^{-1})=\alpha(e)$, as $g[x][x^{-1}]=g$ must hold for all $g\in G$ and
$x\in \ol{X}$. In such graphs there is a path from $u$ to $v$ if and only if there is a path from $v$ to $u$, and in that sense we may speak simply about the
\emph{connectedness} of graphs and subgraphs. Also, it is customary for the notion of the subgraph to require that it is also closed for the $^{-1}$ operation.
The \emph{label} of a path $p=e_1\dots e_m$ is the word $\ell(e_1)\cdots\ell(e_m)\in \ol{X}^*$ or just the empty word for an empty path around a vertex.
An important remark is that $G$ has a natural action on $\Cay(G,X)$ by labelled graph automorphisms via left multiplication: indeed, for any $g_1,g_2,h\in G$ there is an
edge labelled by $x\in\ol{X}$ from $g_1$ to $g_2$ if and only if there is such an edge from $hg_1$ to $hg_2$. In this sense, for a subgraph $\Delta\subseteq
\Cay(G,X)$ and $g\in G$ we write $g\Delta$ as the result of this left action on $\Delta$: all the vertices of $\Delta$ are multiplied from the left by $g$.
In $\Cay(G,X)$ whenever we spot a cycle based at $1$ and labelled by $w\in\ol{X}^*$ then we have $[w]_G=1$ (for which we also say that ``$w=1$ holds in $G$''); 
however, because of the previous fact, this is true for \emph{any} cycle in $\Cay(G,X)$, regardless on the vertex $g$ it is based at, because if $\Xi$ is the
subgraph spanned by the cycle then $g^{-1}\Xi$ is a subgraph spanned by a cycle based at $1$.

Cayley graphs are \emph{deterministic} in the sense that every path is uniquely determined by its initial vertex and its label. For a word $w\in\ol{X}^*$
we denote by $\Gamma_w$ the subgraph of $\Cay(G,X)$ spanned by the path starting at $1$ and labelled by the word $w$. Note that such a graph necessarily contains
the vertex $[w]$, as this is the terminal vertex of the path inducing $\Gamma_w$. In fact, for the sake of simplifying somewhat the terminology, we are going to
identify simple paths and subgraphs spanned by them whenever this does not cause confusion.

\subsection{Margolis-Meakin expansions} \label{subsec:mgx}

Let $G$ be an $X$-generated group and let $M(G,X)$ be the collection of all pairs $(\Gamma,g)$ such that $\Gamma$ is a finite connected subgraph of $\Cay(G,X)$ 
containing the vertices $1$ and $g$. $M(G,X)$ turns into an inverse monoid with the operations defined by
$$
(\Delta,g)(\Xi,h) = (\Delta\cup g\Xi,gh)
$$
and
$$
(\Delta,g)^{-1} = (g^{-1}\Delta,g^{-1}).
$$
$M(G,X)$ is called the \emph{Margolis-Meakin expansion} of the $X$-generated group $G$. This inverse monoid is $X$-generated with respect to $x\mapsto
(\Gamma_x,[x])$, $x\in X$, and it is the universal $X$-generated $E$-unitary inverse monoid with maximum group image $G$ \cite[Theorem 2.2]{MM89}. 

The natural order on $M(G,X)$ is given by $(\Delta,g)\leq (\Xi,h)$ if and only if $\Xi\subseteq\Delta$ and $g=h$. Also, $(\Delta,g)\mathrel{\sigma} (\Xi,h)$ 
if and only if $g=h$.

By \cite[Corollary 2.9]{MM89} an inverse monoid presentation for $M(G,X)$ over the generators $X$ is given by relations $w^2=w$ for all $w\in\ol{X}^*$ such that $w=1$
holds in $G$. Notice that when $G=FG(X)$ is the free group on $X$ then $w=1$ in $G$ if and only if $\red(w)$ is the empty word (such words are called \emph{Dyck words}).
However, it can be shown by an easy inductive argument that any Dyck word represents an idempotent in any inverse monoid, and hence all the defining relations become
redundant. Therefore, the free inverse monoid $FIM(X)$ is, as discovered earlier in \cite{Munn,Sch}, just the expansion $M(FG(X),X)$ of the free group $FG(X)$ with respect
to the standard set of its free generators.

\subsection{Van Kampen diagrams} \label{subsec:van-k}

\emph{Van Kampen diagrams} \cite{LSch} are convenient topological tools invented to analyse the word problem of a group, that is the question whether
a word $w\in\ol{X}^*$ satisfies $w=1$ in the group. We assume that the underlying group is given by a presentation $G=\Gp\pre{X}{w_i=1\; (i\in I)}$ where all
words $w_i$ are cyclically reduced. A van Kampen diagram $D$ over $G$ is a cell 2-complex along with its embedding into the real plane $\mathbb{R}^2$ such that
the following properties hold:
\begin{itemize}
\item there is a vertex (a 0-cell) called the \emph{base vertex};
\item the underlying 1-skeleton of $D$ is a planar directed $\ol{X}$-labelled graph (so that the 1-cells are directed edges labelled by elements of $\ol{X}$)
closed under $^{-1}$, meaning that for any edge $e$ that belongs to $D$ the edge $e^{-1}$ also belongs to $D$;
\item $D$ is connected and simply connected;
\item for every \emph{region} (2-cell) $R$ of $D$, every vertex in the cycle formed by its \emph{boundary} $\partial R$ and every choice of the two possible directions,
the word labeling the cycle based at the chosen vertex is a cyclic conjugate of either $w_i$ or $w_i^{-1}$ for some $i\in I$.
\end{itemize}

Here, a word $w'$ is a \emph{cyclic conjugate} of a word $w$ if there is a factorisation $w'=uv$ such that $vu=w$. The \emph{area} of $D$ is the number of its regions.

Because of the connectedness and simple connectedness conditions, the boundary $\partial D$ of this complex (which is also the boundary of the unbounded complement
of its planar embedding into $\mathbb{R}^2$) induces a unique cycle (modulo direction) of its 1-skeleton. This cycle is called the \emph{boundary cycle} of the
diagram; if the label of the boundary cycle (upon fixing a direction) is the word $w\in\ol{X}^*$ then we say that we have a \emph{van Kampen diagram for $w$ over $G$}. 

A fundamental result for van Kampen diagrams is the \emph{van Kampen Lemma} \cite[Theorem V.1.1 \& Lemma V.1.2]{LSch} which states (along with some extra information) 
that $w=1$ holds in $G$ if and only if $w$ has a van Kampen diagram over $G$.


\section{The universal inverse monoid with the strong $F$-inverse property} \label{sec:univ}

Let $G$ be an $X$-generated group; throughout we assume that $G$ is given by a presentation $G=\Gp\pre{X}{w_i=1\; (i\in I)}$ where $w_i\in\ol{X}^*$ are cyclically 
reduced words. As it becomes apparent from Subsection \ref{subsec:mgx}, an element $(\Gamma,g)\in M(G,X)$ is maximal in its $\sigma$-class if and only if $\Gamma$
is the graph spanned by a simple path $1\leadsto g$ in $\Cay(G,X)$. Indeed, if $\Gamma$ is not spanned by one such simple paths, then by the connectedness condition
there is a subset $E'\subset E(\Gamma')$ forming a simple path from $1$ to $g$, and then we have $(\Gamma,g)<(\Gamma',g)$ for the graph $\Gamma'$ spanned by $E'$.
Note that in the case $g=1$ there is only one maximal element in the corresponding $\sigma$-class: the one formed by the empty graph (with no edges, just 
the single vertex $1$).

Bearing in mind our interest in the strong $F$-inverse property (defined in the introduction), let $\xi_G$ be the congruence of $M(G,X)$ generated by all pairs of the form
$$
\left( (\Pi_1,g),(\Pi_2,g) \right)
$$
where $g\in G$ and $\Pi_1,\Pi_2$ are two subgraphs of $\Cay(G,X)$ spanned by some simple paths from $1$ to $g$. Define $M_{sF}(G,X)=M(G,X)/\xi_G$. Then the 
following observations become immediate.

\begin{pro}
\begin{enumerate}
\item The inverse monoid $M_{sF}(G,X)$ is $X$-generated and strongly $F$-in\-ver\-se, with maximum group image $G$.
\item $M_{sF}(G,X)$ is universal for the class of all $X$-generated inverse monoids that are strongly $F$-inverse and have a quotient of $G$ as the 
maximum group image. More precisely, if $S$ is an $X$-generated inverse monoid that is strongly $F$-inverse such that there is a canonical group morphism 
$\nu:G\to S/\sigma$, where $\sigma$ is the minimum group congruence of $S$, then there exists a canonical inverse monoid morphism $\phi:M_{sF}(G,X)\to S$ 
such that the following diagram of canonical morphisms commutes:
$$
\xymatrix@C+2pc{M_{sF}(G,X) \ar[r]^{\phi} \ar[d]_{} & S \ar[d]^{} \\
G \ar[r]_{\nu} & S/\sigma}
$$
\end{enumerate}
\end{pro}

\begin{proof}
(1) $M_{sF}(G,X)$ is clearly $X$-generated, via $\iota: x\mapsto (\Gamma_x,[x])\xi_G$. Since $\xi_G$ is contained in the $\sigma$ relation of $M(G,X)$, we
have that $M_{sF}(G,X)/(\sigma/\xi_G)\cong M(G,X)/\sigma \cong G$, so $G$ must be the maximum group image of $M_{sF}(G,X)$ and its minimum group congruence is 
$\sigma/\xi_G$. 
For each $\sigma/\xi_G$-class, the $\xi_G$-class containing all the maximal elements of $M(G,X)$ contained in the corresponding $\sigma$-class
is the maximum element of the $\sigma/\xi_G$-class in question. Indeed, let $(\Pi_1, g)$ be a maximal element of its $\sigma$-class in $M(G,X)$ and 
$(\Delta, g)$ any element of the same $\sigma$-class. We show that $(\Pi_1,g)\xi_G \geq (\Delta,g)\xi_G$. Since $(\Delta,g) \mathrel{\sigma} (\Pi_1,g)$ 
and every element of $M(G,X)$ is below a maximal element, there is a maximal element $(\Pi_2,g)$ of the same $\sigma$-class such that $(\Delta,g)\leq (\Pi_2,g)$. 
This implies that $(\Delta,g)\xi_G\leq (\Pi_2,g)\xi_G = (\Pi_1,g)\xi_G$. Noting that $(\Delta,g)$ runs through the $\sigma$-class of $(\Pi_1,g)$ if and only if 
$(\Delta,g)\xi_G$ runs through the $\sigma/\xi_G$ class of $(\Pi_1,g)\xi_G$, we conclude that $(\Pi_1,g)\xi_G$ is the maximum element in its $\sigma/\xi_G$-class.
Therefore, $M_{sF}(G,X)$ is$F$-inverse, and the very definition of $M_{sF}(G,X)$ shows that it is actually strongly $F$-inverse.

(2) By \cite[Theorem 2.2]{MM89} (see also \cite[Proposition 2.1 (7)]{KLF}), there exists a canonical morphism of inverse monoids $\phi_0:M(G,X)\to S$ such that
the following diagram commutes:
$$
\xymatrix@C+2pc{M(G,X) \ar[r]^{\phi_0} \ar[d]_{} & S \ar[d]^{} \\
G \ar[r]_{\nu} & S/\sigma}
$$
To prove our statement, it suffices to show that $\phi_0$ and the canonical morphism of $M(G,X)$ onto its maximum group image $G$ both factorise through 
$\psi:M(G,X)\to M_{sF}(G,X)$, the canonical quotient map. For the latter of these two morphisms, this follows from the fact that $\xi_G\subseteq\sigma$, 
noted in the previous part. So, it suffices to show that $\ker\psi=\xi_G\subseteq\ker\phi_0$.

Indeed, assume that $\Pi_1$ and $\Pi_2$ are two subgraphs of $\Cay(G,X)$ spanned by simple paths connecting $1$ and $g$; then $(\Pi_1,g)$ and $(\Pi_2,g)$ 
are two maximal elements in a $\sigma$-class of $M(G,X)$. By the previous diagram, $\phi_0(\Pi_1,g)$ and $\phi_0(\Pi_2,g)$ belong to the same class of
the minimum group congruence of $S$. Since $S$ is assumed to be strongly $F$-inverse, they must be both the top element in that class, so we have 
$\phi_0(\Pi_1,g)=\phi_0(\Pi_2,g)$, which was exactly what was to be shown.
\end{proof}

\begin{cor}\label{cor:hom}
Let $S$ be an $X$-generated inverse monoid and let $\sigma$ be its minimum group congruence. Then $S$ is strongly $F$-inverse if and only if $S$ is a homomorphic image of $M_{sF}(S/\sigma,X)$.
\end{cor}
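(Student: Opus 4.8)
The plan is to derive Corollary \ref{cor:hom} as a direct consequence of Theorem \ref{thm:gen}, treating each implication separately.

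First I would prove the ``if'' direction. Suppose $S$ is a homomorphic image of $M_F(S/\sigma,X)$. Note that $M_F(S/\sigma,X)$ is $F$-inverse by Theorem \ref{thm:gen}(1). The key observation needed here is that the $F$-inverse property is preserved under \emph{canonical} surjective homomorphisms of $X$-generated inverse monoids. This is not automatic for arbitrary homomorphisms, so I would need to be slightly careful: if $\phi\colon T\to S$ is a surjective canonical morphism with $T$ being $F$-inverse and $\sigma_T,\sigma_S$ the respective minimum group congruences, then $\phi$ maps each $\sigma_T$-class onto a $\sigma_S$-class (because $T/\sigma_T$ and $S/\sigma_S$ are both the common maximum group image, compatibly with $\phi$), and since $\phi$ is order-preserving, the image of the maximum element of a $\sigma_T$-class is an element of the corresponding $\sigma_S$-class lying above every element of that class; hence it is the maximum, and $S$ is $F$-inverse. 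I would also remark that the hypothesis that the image is specifically $M_F(S/\sigma,X)$ (rather than some other $M_F(G,X)$) guarantees the maximum group image of the homomorphic image is exactly $S/\sigma$, which is consistent.

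Second, the ``only if'' direction is essentially an instantiation of the universal property. Suppose $S$ is $F$-inverse with minimum group congruence $\sigma$. Apply Theorem \ref{thm:gen}(2) with $G = S/\sigma$ and $\nu = \id_{S/\sigma}$: since $S$ is $X$-generated and $S/\sigma$ is (trivially) a quotient of itself via the identity canonical morphism, there is a canonical inverse monoid morphism $\phi\colon M_F(S/\sigma,X)\to S$. It remains to observe that $\phi$ is surjective: this is immediate because $\phi$ is canonical and $S$ is $X$-generated, so the image of $\phi$ contains a generating set of $S$ and hence all of $S$. Therefore $S$ is a homomorphic image of $M_F(S/\sigma,X)$.

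I expect the main subtlety — rather than a genuine obstacle — to be the first direction, specifically pinning down precisely why the $F$-inverse property transfers along the relevant quotient map. One must use that the quotient map respects generators and therefore induces (and is compatible with) the identity on the maximum group image $S/\sigma$, so that $\sigma$-classes map onto $\sigma$-classes; combined with order-preservation this forces maxima to map to maxima. Everything else is a routine unwinding of Theorem \ref{thm:gen}, so the corollary follows in a few lines.
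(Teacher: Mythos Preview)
Your argument is essentially correct and matches what the paper intends: the corollary is stated there with no proof at all, as an immediate consequence of Theorem~\ref{thm:gen}, so you are simply spelling out what the authors leave implicit. The ``only if'' direction is exactly the instantiation of Theorem~\ref{thm:gen}(2) with $G=S/\sigma$ and $\nu=\id$, and your remark that a canonical morphism into an $X$-generated monoid is automatically surjective is the right justification.

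One phrasing issue in the ``if'' direction deserves tightening. You write that ``the $F$-inverse property is preserved under canonical surjective homomorphisms of $X$-generated inverse monoids'' as if this were a general fact. It is not: $FIM(X)\cong M(FG(X),X)$ is $F$-inverse, yet every $X$-generated inverse monoid (including non-$E$-unitary ones) is a canonical quotient of it. What actually makes your argument go through is the additional hypothesis, which you do use, that the domain and codomain have \emph{the same} maximum group image and that the induced map on it is the identity. This is precisely why the corollary uses $M_F(S/\sigma,X)$ rather than $M_F(G,X)$ for an arbitrary $G$ mapping onto $S/\sigma$: only then does the canonical map on maximum group images become the identity, forcing $\sigma$-classes to correspond bijectively so that maxima map to maxima. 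Your parenthetical ``because $T/\sigma_T$ and $S/\sigma_S$ are both the common maximum group image, compatibly with $\phi$'' captures this, but the lead-in sentence overstates it. Rephrase the key observation to say that $F$-inverseness is preserved under canonical surjections that induce an isomorphism on maximum group images, and the write-up is clean.
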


Call a cyclically reduced word $w\in\ol{X}^*$ \emph{cyclic} with respect to $(G,X)$ if $w$ labels a simple cycle in the Cayley graph $\Cay(G,X)$.
(Note that in such a case $w=1$ necessarily holds in $G$.) It is easy to see that $w$ is cyclic if and only if for any proper prefix $u$ of $w$ such that
$w=uv$ we have that the words $u$ and $v^{-1}$ label coterminal simple paths in $\Cay(G,X)$ that have no vertices in common except their initial vertex $1$
and terminal vertex $[u]_G$. 

Let $\theta_G$ be the set of all pairs of words $(u,v)$, where $u,v\in\ol{X}^*$ are non-empty, such that $uv$ is cyclic with respect to $(G,X)$ and let 
$\theta_G^\sharp$ be the congruence on $M(G,X)$ generated by the set
$$
\{\left((\Gamma_u,[u]),(\Gamma_{v^{-1}},[v^{-1}])\right):\ (u,v)\in\theta_G\}.
$$
Here is the main result of this section.

\begin{thm}\label{thm:pre1}
Let $G$ be an $X$-generated group. Then $\xi_G=\theta_G^\sharp$, and thus $M_{sF}(G,X)$ is presented, as an $X$-generated inverse monoid, by the relations:
\begin{itemize}
\item[(i)] $w^2=w$ for all words $w\in\ol{X}^*$ such that $w=1$ holds in $G$,
\item[(ii)] $u=v^{-1}$ for all words $u,v\in\ol{X}^*$ such that $(u,v)\in\theta_G$.
\end{itemize}
\end{thm}

\begin{proof}
The containment $(\supseteq)$ is immediate from the very definition of $\theta_G^\sharp$ as the latter congruence is generated by a set of $\xi_G$-related pairs.
So, we need to prove $(\subseteq)$: for any two coterminal simple paths $\Pi_1$ and $\Pi_2$, connecting $1$ and some element $g\in G$ in $\Cay(G,X)$, we have to prove that 
$$\left((\Pi_1,g),(\Pi_2,g)\right) \in \theta_G^\sharp.$$
Once we show this, the theorem follows straightforwardly, as \cite[Corollary 2.9]{MM89} implies that the relations (i) comprise a presentation for the 
Margolis-Meakin expansion $M(G,X)$.

Assume that $\Pi_1,\Pi_2$ are labelled, respectively, by words $u$ and $v$ (implying that $[u]_G=[v]_G=g$). Let $y_0=1,y_1,\dots,y_{m-1},y_m=g$ be the vertices 
of $\Pi_1$ that also belong to $\Pi_2$ numbered in the order as we traverse the path from $1$ to $g$; this induces a factorisation $u=u_1\cdots u_m$, where $u_j$ 
is the subword of $u$ read in the segment of the path from $y_{j-1}$ to $y_j$. Analogously, let $z_0=1,z_1,\dots,z_{m-1},z_m=g$ be the vertices of $\Pi_2$ belonging 
also to $\Pi_1$ in the order as they occur on $\Pi_2$ from $1$ to $g$, giving rise to a factorisation $v=v_1\cdots v_m$. Clearly, 
$\{y_1,\dots,y_{m-1}\}=\{z_1,\dots,z_{m-1}\}$, however the order in which $y_1,\dots,y_{m-1}$ appear on $\Pi_2$ may be different, so there is a permutation $\pi$ 
of the set $\{0,1,\dots,m-1,m\}$ such that $z_j=y_{\pi(j)}$ for all $0\leq j\leq m$ with $\pi(0)=0$ and $\pi(m)=m$.

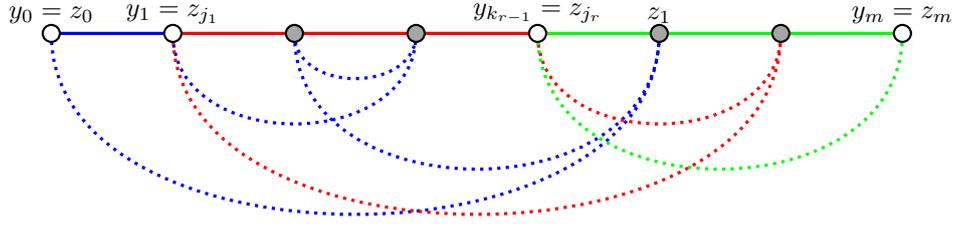
\begin{figure}[t]
\begin{center}{\small
\begin{tikzpicture} [scale=1.6]

\draw[blue, very thick] (0,0) -- (1,0);
\draw[red, very thick] (1,0) -- (4,0);
\draw[green, very thick] (4,0) -- (7,0);

\draw[blue, very thick, dotted] (0,0) .. controls (0,-2) and (5,-2) .. (5,0);
\draw[blue, very thick, dotted] (5,0) .. controls (5,-1.5) and (2,-1.5) .. (2,0);
\draw[blue, very thick, dotted] (2,0) .. controls (2,-0.5) and (3,-0.5) .. (3,0);
\draw[blue, very thick, dotted] (3,0) .. controls (3,-1) and (1,-1) .. (1,0);
\draw[red, very thick, dotted] (1,0) .. controls (1,-2) and (6,-2) .. (6,0);
\draw[red, very thick, dotted] (6,0) .. controls (6,-1) and (4,-1) .. (4,0);
\draw[green, very thick, dotted] (4,0) .. controls (4,-1.5) and (7,-1.5) .. (7,0);

\filldraw[color=black, fill=white, thick] (0,0) circle (2pt) node[anchor=south]{$y_0=z_0$};
\filldraw[color=black, fill=white, thick] (1,0) circle (2pt) node[anchor=south]{$y_1=z_{j_1}$};
\filldraw[color=black, fill=gray!70, thick] (2,0) circle (2pt);
\filldraw[color=black, fill=gray!70, thick] (3,0) circle (2pt);
\filldraw[color=black, fill=white, thick] (4,0) circle (2pt) node[anchor=south]{$y_{k_{r-1}}=z_{j_r}$};
\filldraw[color=black, fill=gray!70, thick] (5,0) circle (2pt) node[anchor=south]{$z_1$};
\filldraw[color=black, fill=gray!70, thick] (6,0) circle (2pt);
\filldraw[color=black, fill=white, thick] (7,0) circle (2pt) node[anchor=south]{$y_m=z_m$};

\end{tikzpicture}
\caption{The full line represents the path $\Pi_1$ and the dotted line represents the path $\Pi_2$. The simple cycles considered in the proof, corresponding to
\eqref{eq:1}, \eqref{eq:2} and \eqref{eq:3}, are coloured in blue, red, and green, respectively. The vertices $y_0,y_1,\dots,y_{k_{r-1}},\dots,y_m$ are highlighted in white. 
In this particular instance, the permutation $\pi^{-1}$ maps $01234567$ to $05231647$.}
\label{fig:paths}
}\end{center}
\end{figure}

Let $j_1=\pi^{-1}(1)$. Then the initial segment of the path $\Pi_1$ from $y_0=1$ to $y_1$ and the initial segment of $\Pi_2$ passing through vertices 
$z_1,z_2,\dots$ up until $z_{j_1}=y_{\pi(j_1)}=y_1$ are two coterminal paths in $\Cay(G,X)$ that have now vertices in common except the initial and the terminal one, 
which implies that either $j_1=1$ and $u_1=v_1$ or $u_1(v_1\cdots v_{j_1})^{-1}$ is a cyclic word. In either case we have that
\begin{equation} \label{eq:1}
(u_1,v_1\cdots v_{j_1}) \in \theta_G.
\end{equation}
Now we are going to define two sequences of indices 
$(j_s)_{s\geq 1}$ and $(k_s)_{s\geq 0}$ starting from $j_1$ and $k_0=1$, respectively. Assume that for some $r\geq 0$
the indices $1\leq j_1<\cdots<j_r$ and $1<k_1\cdots<k_{r-1}$ have already been chosen, labelling the vertices $z_{j_1},\dots,z_{j_r}$ and $y_{k_0},\dots,y_{k_{r-1}}$.
Let
$$
k_r = \min\left(\{1,\dots,m-1\}\setminus\{\pi(1),\pi(2),\dots,\pi(j_r)=k_{r-1}\}\right),
$$
provided that the previous set is not empty. In other words, we are looking for the vertex $y_k$ with the least index having the property that is has not been yet 
visited by the initial segment of the path $\Pi_2$ considered thus far (passing through vertices $z_{j_1},\dots,z_{j_r}$). Then let $j_{r+1} = \pi^{-1}(k_r)$. 
Similarly as before, the segment $\Sigma_1$ of $\Pi_1$ between $y_{k_{r-1}}$ and $y_{k_r}$ and the segment $\Sigma_2$ of $\Pi_2$ between $z_{j_r}=y_{k_{r-1}}$ and 
$z_{j_{r+1}}=y_{\pi(j_{r+1})}=y_{k_r}$ form  a pair of coterminal simple paths. Aside from the initial and the terminal vertex, these two simple paths do not have 
vertices in common by the very choice of $k_r$: all vertices $y_s$ of $\Sigma_1$ strictly between $y_{k_{r-1}}$ and $y_{k_r}$ (if any) have been already visited 
by $\Pi_2$ so none of them can belong to $\Sigma_2$. Hence, we either have that $k_r= k_{r-1}+1$ and $j_{r+1} = j_r+1$ with $u_{k_{r-1}+1} = v_{j_r+1}$ or the word 
$$u_{k_{r-1}+1}\cdots u_{k_r}\left(v_{j_r+1}\cdots v_{j_{r+1}}\right)^{-1}$$
is cyclic. It follows that 
\begin{equation} \label{eq:2}
(u_{k_{r-1}+1}\cdots u_{k_r},v_{j_r+1}\cdots v_{j_{r+1}}) \in \theta_G.
\end{equation}
The other possibility is that we have arrived at 
$$\{\pi(1),\pi(2),\dots,\pi(j_r)\}=\{1,\dots,m-1\},$$ 
whence we must have $j_r=m-1$. Then we choose $k_r=j_{r+1}=m$ and, since either $k_{r-1}=m-1$ with $u_m=v_m$ holds or $u_{k_{r-1}+1}\cdots u_mv_m^{-1}$ is a 
cyclic word, we deduce 
\begin{equation} \label{eq:3}
(u_{k_{r-1}+1}\cdots u_m,v_m) \in \theta_G.
\end{equation}
The equations \eqref{eq:1}, \eqref{eq:2} and \eqref{eq:3} imply
$$
\big((\Pi_1,[u]_G),(\Pi_2,[v]_G)\big)\in \theta_G^\sharp,
$$
as required.
\end{proof}

Bearing in mind the the results of \cite{Nora} (more specifically, Proposition 4.3 of that paper) which shows that there is a bijective correspondence between 
$E$-unitary $X$-generated inverse monoids with maximum group image $G$ and closure operators on the Cayley graph of $G$ with respect to $X$, we now build 
a model for $M_{sF}(G,X)$. 

Call a subgraph $\Delta\subseteq\Cay(G,X)$ \emph{cyclic} if for every edge $e$ of $\Delta$ it contains all edges of any simple cycle containing $e$. 
Then it is straightforward to show that for any subgraph $\Delta\subseteq\Cay(G,X)$ there is the smallest cyclic subgraph of $\Cay(G,X)$ containing $\Delta$, which is called the \emph{cyclic closure} of $\Delta$ and
denoted here by $\Delta^\circ$; also, the map $\Delta\mapsto\Delta^\circ$ turns out to be a $G$-invariant, finitary closure operator on the family of 
all connected subgraphs of $\Cay(G,X)$ in the sense of \cite[Subsection 2.2]{Nora}. The subgraphs of the form $\Delta=F^\circ$ for a finite connected subgraph 
$F\subseteq\Cay(G,X)$ are called \emph{compact}. Consider the set of all pairs of the form
$$
(\Delta,g)
$$
where $g\in G$ and $\Delta$ is a compact connected subgraph of $\Cay(G,X)$ containing the vertices $1$ and $g$. Upon recalling the left action of $G$ on the graph
$\Cay(G,X)$ from the previous section, define the operations
$$
(\Delta,g)(\Xi,h) = ((\Delta\cup g\Xi)^\circ,gh)
$$
and 
$$
(\Delta,g)^{-1} = (g^{-1}\Delta,g^{-1}).
$$
As proved in \cite[Lemma 4.2]{Nora}, in this way we obtain an $X$-generated $E$-unitary inverse monoid $S_\circ$. Note that, by the very definition, if $\Delta,\Xi$ 
are assumed to be cyclic, so is $\Delta\cup g\Xi$, and thus we may omit the application of the closure operator $^\circ$ from the definition of the multiplication.
The inverse monoid generators of $S_\circ$ are the pairs $(\Gamma_x^\circ,[x])$, $x\in X$. Now, the immediate joint effect of \cite[Proposition 4.3]{Nora} and Theorem
\ref{thm:pre1} above is as follows, upon noticing that whenever $w=1$ holds in $G$, the words $w$ and $w^2$ span the same subgraph of $\Cay(G,X)$ (namely the cycle
induced by $w$).

\begin{pro}
Let $G$ be an $X$-generated group. Then $M_{sF}(G,X)\cong S_\circ$.
\end{pro}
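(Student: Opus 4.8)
The plan is to establish the isomorphism $M_F(G,X)\cong S_\circ$ by exhibiting $S_\circ$ as a quotient of $M(G,X)$ by exactly the congruence $\theta_G^\sharp$, using the dictionary of \cite[Proposition 4.3]{Nora} between closure operators on $\Cay(G,X)$ and $E$-unitary $X$-generated inverse monoids with maximum group image $G$. First I would recall from the cited proposition that $S_\circ$, being obtained from the $G$-invariant finitary closure operator $\Delta\mapsto\Delta^\circ$, is a canonical quotient of $M(G,X)$: the quotient map sends $(\Delta,g)\in M(G,X)$ to $(\Delta^\circ,g)$, and its kernel is the congruence on $M(G,X)$ identifying $(\Delta,g)$ with $(\Xi,h)$ precisely when $g=h$ and $\Delta^\circ=\Xi^\circ$. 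So the task reduces to the purely graph-theoretic claim that this kernel equals $\theta_G^\sharp$, or equivalently that the closure operator $\Delta\mapsto\Delta^\circ$ is the one induced by the congruence $\theta_G^\sharp$ under the Nora correspondence.

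Next I would prove the two inclusions. For $\theta_G^\sharp\subseteq\ker(\psi_\circ)$ it suffices to check the generators: if $(u,v)\in\theta_G$ then $uv$ is cyclic, so the edge set of the simple cycle spanned by $uv$ lies in $\Gamma_u^\circ$ (and in $\Gamma_{v^{-1}}^\circ$), since $\Gamma_u$ already contains one edge of that simple cycle and a cyclic subgraph must contain the rest; one then checks $\Gamma_u^\circ=\Gamma_{v^{-1}}^\circ$ because each is the cyclic closure of a simple path making up part of the same simple cycle, hence each absorbs the whole cycle and thus the other path. Together with $[u]=[v^{-1}]$ this gives $(\Gamma_u,[u])$ and $(\Gamma_{v^{-1}},[v^{-1}])$ the same image in $S_\circ$, so $\theta_G^\sharp\subseteq\ker(\psi_\circ)$. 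For the reverse inclusion I would argue that $S_\circ$ is $F$-inverse: a $\sigma$-class of $S_\circ$ over $g\in G$ has maximum element $(\Pi^\circ,g)$ for any simple path $\Pi$ from $1$ to $g$, and since the natural order in $S_\circ$ is reverse inclusion of (cyclic) graphs, maximality forces this to be the $\subseteq$-least cyclic graph containing $1$ and $g$ — but all simple paths $\Pi$ from $1$ to $g$ give the \emph{same} cyclic closure, because the proof of Theorem \ref{thm:gen}(1) shows any two such are connected by a chain of the $\theta_G$-moves, each of which is absorbed by $^\circ$. Hence $S_\circ$ is $F$-inverse with maximum group image $G$, so by Corollary \ref{cor:hom} together with the universal property in Theorem \ref{thm:gen}(2) there is a canonical morphism $M_F(G,X)\to S_\circ$; and since $S_\circ$ is a quotient of $M(G,X)$ through which $\psi_\circ$ kills at least $\theta_G^\sharp$, the map $M_F(G,X)\to S_\circ$ is onto, while the reverse canonical morphism $S_\circ\to M_F(G,X)$ exists because $M_F(G,X)$ is $E$-unitary and its closure operator contains the cyclic-closure relations. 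Composing the two canonical morphisms in either order yields the identity on generators, hence the identity, giving the isomorphism.

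The main obstacle I expect is verifying carefully that the closure operator attached to $\theta_G^\sharp$ via the Nora correspondence is \emph{exactly} cyclic closure, rather than something a priori larger: one direction (cyclic closure refines $\theta_G^\sharp$'s closure) is the generator computation above, but the other direction requires knowing that the congruence generated by the $\theta_G$-moves does not force any further graph identifications beyond adding edges of simple cycles — this is where one needs that the $\theta_G$-moves, although stated for pairs of \emph{words}, translate on the graph side precisely to ``throw in all edges of a simple cycle through a given edge,'' and no more. The cleanest way around this is probably to avoid computing the closure operator head-on and instead use the universal properties as sketched: show $S_\circ$ is $F$-inverse (so it receives a canonical map from $M_F(G,X)$) and show $M_F(G,X)$ is $E$-unitary with its associated closure operator $\geq$ cyclic closure (so it receives a canonical map from $S_\circ$, by Nora's correspondence applied to the quotient $M(G,X)\twoheadrightarrow M_F(G,X)$), then conclude by uniqueness of canonical morphisms between $X$-generated structures. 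A small point to handle along the way is the parenthetical remark that $w$ and $w^2$ span the same subgraph when $w=1$ in $G$, which is what makes the relations (i) of Theorem \ref{thm:gen}(3) compatible with passing to $S_\circ$.
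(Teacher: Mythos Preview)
Your proposal is correct and follows essentially the same route as the paper, which treats the proposition as an immediate consequence of \cite[Proposition 4.3]{Nora} and Theorem \ref{thm:gen}, together with the observation (your ``small point'') that $\Gamma_w=\Gamma_{w^2}$ whenever $w=1$ holds in $G$. Your write-up is considerably more detailed than the paper's one-sentence justification---in particular, the mutual-canonical-morphism argument you sketch is one natural way to unpack what ``immediate joint effect'' means---but the ingredients and the logic are the same.
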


\begin{rmk}
The previous proposition and the discussion preceding it reveal how diverse the objects $M_{sF}(G,X)$ can be, depending on the geometry of $\Cay(G,X)$
and, in particular, the way in which cycles overlap in this graph. Let us take a quick look at two constrasting examples. 
First, in the Cayley graph of the free $X$-generated group $G=FG(X)$ there are no cycles at all (as it is a tree), 
so the cyclic closure of any connected finite subgraph is the subgraph itself. Hence, $M_{sF}(FG(X),X)$ is just $M(FG(X),X)$,
which can be also easily seen from Theorem \ref{thm:pre1}. On the other hand, consider the case $X=\{a,b\}$ and $G=\Gp\pre{a,b}{a^{-1}b^{-1}ab=1}$, the free
Abelian group on two generators. The Cayley graph is an infinite grid in $\mathbb{R}^2$ whose elements can be thought of a pairs of integers, made up from squares 
(4-cycles) formed by two coterminal paths labelled by $ab$ and $ba$. It is readily seen that the cyclic closure of \emph{any} non-empty subgraph 
is the whole Cayley graph $\Cay(G,X)$. Therefore, with the exception of the element $\lambda=(\es,1)$ (where $\es$ denotes the graph with one vertex, $1$, and no edges), 
all elements with the same group component are identified by $\xi_G$, and we have $M_{sF}(G,X)\cong G\cup\{\lambda\}$ (where now $\lambda$ acts
as an externally added identity element to the group $G$). 

The group $G=\Gp\pre{a,b}{(ab)^2=1}$ provides an intermediate example ($G\cong \mathbb{Z}\ast\mathbb{Z}_2$ with respect to $a$ and $t=ab$). Namely, $\Cay(G,\{a,b\})$ 
is here a ``tree of squares'' (4-cycles labelled by $abab$), and at every vertex two such cycles meet (one containing the outgoing edge $a$ and incoming edge $b$, 
the other containing the outgoing edge $b$ and incoming edge $a$) but without any edge overlaps. So, the cyclic closure of a single edge is the unique 4-cycle it belongs to, 
and thus for any subgraph $\Gamma$, $\Gamma^\circ$ is the union of the 4-cycles (``squares'') containing the edges of $\Gamma$.
\end{rmk}


\section{An application to one-relator special inverse monoids} \label{sec:sims}

The goal of this section is to apply the general result from the previous section to obtain a description of one-relator special inverse monoids $\Inv\pre{X}{w=1}$ 
with the strong $F$-inverse property, where $w$ is a cyclically reduced word. This characterisation will be given in terms of the factorisation of the relator word 
$w$ into minimal invertible pieces (cf.\ Subsection \ref{subsec:inv}).

We say that an $X$-generated group $G$ has the \emph{property $(\dagger)$} if it admits a presentation $G=\Gp\pre{X}{w_i=1\; (i\in I)}$ such that each relator word $w_i$ 
is cyclic. This is equivalent to requiring that no subword of any of the words $w_i$ equals $1$ in $G$. If this is indeed the case, all words $w_i$ must be cyclically 
reduced. By a result of Weinbaum \cite{W2}, all one-relator groups have this property, and this is witnessed by \emph{any} of their presentations by one defining relation 
involving a cyclically reduced word. Earlier, the same author proved \cite{W1} that the same property is enjoyed by groups satisfying the small cancellation condition 
$C'(1/6)$ (the so-called \emph{sixth groups}), see also \cite{LSch}. With such a property of the maximum group image of an inverse monoid at hand, the presentation from 
Theorem \ref{thm:pre1} can be considerably simplified.

\begin{thm}\label{thm:pre2}
Let $G$ be an $X$-generated group with the property $(\dagger)$ and let 
$$G=\Gp\pre{X}{w_i=1\; (i\in I)}$$ 
be any of its presentations witnessing this property. Then the inverse monoid $M_{sF}(G,X)$ is presented by the relations 
$$u=v^{-1}$$ 
for all non-empty words $u,v\in\ol{X}^*$ such that $uv$ is a cyclic conjugate of $w_i$ for some $i\in I$.
\end{thm}

\begin{proof}
Bearing in mind Theorem \ref{thm:pre1}, our aim is to show that the relations:
\begin{itemize}
\item[(A)] $u=v^{-1}$ for all words $u,v\in\ol{X}^*$ such that $(u,v)\in\theta_G$, and
\item[(B)] $w^2=w$ for all words $w\in\ol{X}^*$ such that $w=1$ holds in $G$, 
\end{itemize}
all follow (in the context of inverse monoid presentations) from the relations given in the formulation of the theorem.

(A) Let $w\in\ol{X}^*$ be a word such that $w=1$ holds in $G$ and $w$ induces a simple cycle in $\Cay(G,X)$. Let $w=uv$ be an arbitrary factorisation of $w$ into non-empty 
words. By the van Kampen Lemma, there exists a van Kampen diagram $D$ such that the label of the boundary cycle $\partial D$ is precisely $w$. By the given conditions,
it follows that all vertices on $\partial D$ must be distinct (of course, aside from the base vertex, which must appear at least and hence exactly twice); for otherwise, 
if some vertex is repeated on $\partial D$ such that at least one of the repetitions involves an intermediate vertex of the cycle, this would give rise to a proper subword
$w'$ of $w$ and a subcycle $C$ of $\partial D$ labelled by $w'$ such that $C$ together with its interior forms a van Kampen diagram $D'\subseteq D$ for $w'$. In such a case 
we would have $\partial D'=C$ and so conclude that $w'=1$ holds $G$, contrary to the  assumption that $w$ induces a simple cycle in $\Cay(G,X)$. Therefore, bearing in mind that $D$ 
is connected and simply connected, the union of its cells embedded into $\mathbb{R}^2$ is homeomorphic to a disc, i.e.\ $D$ must be a disc diagram. 
It follows that $D$ consists entirely of regions and their boundaries (edges and vertices), and each region is labelled by a cyclic conjugate of some $w_i$, $i\in I$, 
in one of the two possible directions (or we may fix a direction and allow a cyclic conjugate of $w_i^{-1}$ to be a label of the boundary of a region).

Now we prove a claim from which the required result follows by applying it to our word $w$. 

\begin{claim}
If $q\in\ol{X}^*$ is a (not necessarily reduced) word which admits a disc van Kampen diagram $D$ with respect to the given presentation of $G$ and $q=uv$ is an arbitrary 
factorisation into non-empty words, then $u=v^{-1}$ can be deduced from the defining relations from the formulation of the theorem. 
\end{claim}

\begin{proof}[Proof of Claim.]
We prove this by induction on the area of $D$.

If $q$ admits a disc diagram of area 1, this means that either $q$ of $q^{-1}$ is a cyclic conjugate of some word $w_i$, $i\in I$; therefore, since $w_i$ defines, by
assumption, a simple cycle in $\Cay(G,X)$, it follows that either $u=v^{-1}$ or $v=u^{-1}$ is one of the relations from the formulation of the theorem. Hence, assume now that $q$ 
admits a disc diagram of area $n>1$, and that every word from $\ol{X}^*$ admitting a disc diagram of area $<n$ the claim holds.

Next we claim that any disc diagram $D_0$ has a region $R'$ with the property that $\partial R' \cap \partial D_0$ is a single non-empty path; in such a case, removing
$R'$ from $D_0$ results in a diagram $D_0'$ that is again a disc diagram (of area one less that $D_0$), where the edges in $\partial R' \cap \partial D_0$ are replaced
by the edges from $\partial R' \setminus \partial D_0$ as the new part of the boundary of $D_0'$. Indeed, pick any edge from $\partial D_0$ and let $R_0$ be the unique
region whose boundary contains that edge. If $R_0$ has the required property we are done; otherwise, the removal of $R_0$ splits $D_0$ into a finite number of disc
diagrams such that the boundary of each of these diagrams consists of a path contained in the original boundary $\partial D_0$ and a piece of $\partial R_0$. Then
pick $D_1$ to be any of these diagrams, and let $R_1$ be any region of $D_1$ containing an edge from $\partial D_0$. If $\partial R_1\cap \partial D_0$ is a single 
non-empty path, we are again done, for $\partial R_1\setminus \partial D_0$ is then entirely contained in the interior od $D_0$. Otherwise, the removal of $R_1$  
splits $D_1$ into finitely many disc diagrams at least one of which has the property that its boundary consists of a non-empty path from $\partial D_0$ and a piece of
$\partial R_1$, as $\partial R_1\cap \partial D_0$ is a disjoint union of at least two (not necessarily non-empty) paths. Upon letting $D_2$ to be one such diagram,
we repeat the construction. However, this process cannot be repeated indefinitely as the areas of $D_0,D_1,D_2,\dots$ form a strictly decreasing sequence of positive
integers. Therefore, we eventually must arrive to a region with the required property.

Returning to the original claim, let $y_0$ be the base point of $\partial D$ and let $R$ be a region of $D$ such that $\partial R\cap \partial D$ is a single non-empty path
$\Pi$. Also, let $z\in\partial D$ be the the vertex determined by the factorisation $q=uv$. 

Consider first the case when $\Pi$ does not contain $y_0$. Let $q_1$ be the word labelling $\Pi$ and let $q_1q_2$ be the label of $\partial R$ (in the orientation
determined by $\Pi$). Then $q_1q_2$ is a cyclic conjugate of one of the words $w_i$ or $w_i^{-1}$, so $q_1=q_2^{-1}$ is one of the available relations from the formulation 
of the theorem. On the other hand, if $D'$ is the disc diagram obtained by deleting $R$ from $D$, then $D'$ has area $n-1$ and the label of $\partial D'$ is obtained from $q$ 
by replacing the occurrence of its subword $q_1$ corresponding to $\partial R\cap \partial D$ by $q_2^{-1}$. If $z$ is situated on $\partial D$ after we have traversed the boundary 
of $R$, this occurrence of $q_1$ is entirely contained in $u$, so if we replace it by $q_2^{-1}$ we obtain a word $u'$ such that $u=u'$ can be deduced from the relations from the 
formulation of the theorem. By the induction hypothesis, however, $u'=v^{-1}$ can be deduced as well, thus proving the claim. A similar argument leads us to the same conclusion
when $z$ lies on $\partial D$ before it traverses the boundary of $R$. So, it remains to analyse the case when $z\in\partial R$. Then $q=uv=(u_0u_1)(v_1v_0)$, where $u_1v_1=q_1$,
and so from the region $R$ and the available relations related to its boundary we have
$$
u_1 = (v_1q_2)^{-1} = q_2^{-1}v_1^{-1}.
$$
By the induction hypothesis, from the disc diagram $D'$ we have
$$
u_0q_2^{-1} = v_0^{-1},
$$
yileding 
$$
u = u_0u_1 = u_0q_2^{-1}v_1^{-1} = v_0^{-1}v_1^{-1} = (v_1v_0)^{-1} = v^{-1}.
$$

Now assume that the base point $y_0$ belongs to $\Pi=\partial R \cap \partial D$. Let $y=\omega(\Pi)$ and $y'=\alpha(\Pi)$ so that the subpath of $\Pi$ between $y_0$ and $y$ is 
the longest initial segment of $\partial D$ contained in $\partial R$ (corresponding to the prefix $q_0$ of $q$) and the subpath of $\Pi$ between $y'$ and $y_0$ is the longest 
final segment of $\partial D$ contained in $\partial R$ (corresponding to the suffix $q_\infty$ of $q$).

Assume first that $z$ is located on $\partial D$ between $y$ and $y'$ (including the possibility that it coincides with one of these vertices), so that $z$ is \emph{not} an
interior vertex of $\Pi$. Let $u_0$ be the subword of $q$ (and indeed a suffix of $u$) labelling the segment of $\partial D$ between $y$ and $z$, and let $v_0$ be the prefix 
of $v$ labelling the segment of $\partial D$ between $z$ and $y'$. Since the word $q_0tq_\infty$ labelling the boundary $\partial R$ is a cyclic conjugate of one of the words 
$w_i$ or $w_i^{-1}$ (where $t$ labels the part of that boundary) contained in the interior of $D$), we have that the relations from the formulation of the theorem imply 
$$
q_0 = (tq_\infty)^{-1}.
$$ 
If we delete the region $R$ from the diagram and move the base point to $y'$ (say), we get a disc diagram $D''$ of area $n-1$ such that $\partial D''$ is labelled by 
$t^{-1}u_0v_0$. By the inductive hypothesis, 
$$
t^{-1}u_0 = v_0^{-1}
$$
follows from  relations from the formulation of the theorem, and so
$$
u = q_0u_0 = q_\infty^{-1}t^{-1}u_0 = q_\infty^{-1}v_0^{-1} = (v_0q_\infty)^{-1} = v^{-1}.
$$

Now suppose that $z$ is located on $\partial D$ (strictly) between $y_0$ and $y$, inducing the factorisation $q_0=uv_1$. From the region $R$ and the property of the word
labelling its boundary we obtain
$$
u = (v_1tq_\infty)^{-1}.
$$
In such a case, $v_1$ is a prefix of $v_0$, so $v_0=v_1v_2$ for some word $v_2$. However, again by removing $R$ from the diagram similarly as before and by invoking the 
inductive hypothesis, we have $t^{-1}={v_2^{-1}}$ from which we have $t=v_2$ and so 
$$
u = (v_1v_2q_\infty)^{-1} = (v_0q_\infty)^{-1} =v^{-1}.
$$
The case when $z$ is located on $\partial D$ (strictly) between $y'$ and $y_0$ is analogous to the previous case. Thus the inductive proof of the claim is complete.
\end{proof}

(B) We prove this by induction on the area of the van Kampen diagram $D$ for $w$ over $G$ and its given presentation; note that here we do not assume that $D$ is a disc
diagram, as $w$ is an arbitrary word such that $w=1$ holds in $G$. If the area is $0$ then the diagram in question is in fact a tree and so the word read at the boundary 
$\partial D$ is a Dyck word.  It is a routine exercise to show that in any inverse monoid/semigroup a Dyck word over the set of its generators must represent an idempotent. 
Now assume that the area of $D$ is positive. Pick any region $R$, containing at least one edge $e$ from the boundary of $D$. Let $x\in\ol{X}$ be the label of $e$, and
assume that $u$ is the word labelling the remainder of the boundary of $R$. Then $x=u^{-1}$ is one of the relations from the formulation of the theorem. Now delete
the edge $e$ from $D$. This operation merges the interior of the region $R$ with the unbounded region of the diagram and yields a new diagram $D'$ whose boundary word $w'$
is obtained from $w$ by replacing the occurrence of the letter $x$ labelling the removed edge by the word $u^{-1}$. Hence, $w=w'$ is a consequence of the relations
given in the formulation of the theorem. However, the area of $D'$ is by one smaller than the area of $D$, and so by the induction hypothesis we can derive $(w')^2=w'$
from the given relations. Thus we deduce $w^2=w$ and our inductive proof is complete.
\end{proof}

\begin{cor}\label{cor:one-rel}
Let $w\in\ol{X}^*$ be a cyclically reduced word and $M=\Inv\pre{X}{w=1}$. Then $M$ is strongly $F$-inverse if and only if it satisfies the relations $u=v^{-1}$ 
for all non-empty words $u,v\in\ol{X}^*$ such that $uv$ is a cyclic conjugate of $w$. 
\end{cor}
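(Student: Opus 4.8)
The approach I would take is to derive this as a direct consequence of Corollary~\ref{cor:hom} and Theorem~\ref{thm:dag}, with essentially no new combinatorics. The one ingredient that must be checked by hand is the maximum group image of $M=\Inv\pre{X}{w=1}$. Imposing the minimum group congruence on an inverse monoid presentation amounts to reading its defining relations in the free group, so $M/\sigma\cong G$ where $G=\Gp\pre{X}{w=1}$ is the one-relator group on the same relator (we may assume $w$ is non-empty; if $w$ is empty both sides of the claimed equivalence are vacuous and $M=FIM(X)$ is $F$-inverse). Since $w$ is cyclically reduced, Weinbaum's theorem --- already used in Section~\ref{sec:sims} --- tells us that $G$ has property $(\dagger)$ and, more precisely, that the presentation $\Gp\pre{X}{w=1}$ itself witnesses it, because its unique relator is a cyclically reduced word.

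Having identified $M/\sigma$, I would feed the presentation $\Gp\pre{X}{w=1}$ into Theorem~\ref{thm:dag}. This yields that $M_F(G,X)$ is presented, as an $X$-generated inverse monoid, exactly by the relations $u=v^{-1}$ over all non-empty $u,v\in\ol{X}^*$ with $uv$ a cyclic conjugate of $w$ (here the index set is a singleton and the only relator is $w$).

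Now I would apply Corollary~\ref{cor:hom} to $M$: it is $F$-inverse if and only if it is a canonical homomorphic image of $M_F(M/\sigma,X)=M_F(G,X)$. Because $M$ is $X$-generated, any canonical morphism out of $M_F(G,X)$ into $M$ is automatically onto, so $M$ is a canonical homomorphic image of $M_F(G,X)$ precisely when there is a canonical morphism $M_F(G,X)\to M$ at all. By the defining property of a presentation, such a morphism exists if and only if $M$ satisfies every defining relation of $M_F(G,X)$; by the previous paragraph these are exactly the relations $u=v^{-1}$ for non-empty $u,v$ with $uv$ a cyclic conjugate of $w$. Chaining these equivalences together gives the corollary.

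The step I expect to require the most attention is the bookkeeping around conventions rather than anything deep: one should make sure that ``homomorphic image'' in Corollary~\ref{cor:hom} is read as \emph{canonical} homomorphic image with the prescribed group (which is how it is established via Theorem~\ref{thm:gen}(2)), and that the reduction of ``$M$ is a canonical homomorphic image of $M_F(G,X)$'' to ``$M$ satisfies the listed relations'' genuinely only uses that $M_F(G,X)$ is generated by $X$ subject to those relations. After that, nothing remains but to concatenate the three ``if and only if''s.
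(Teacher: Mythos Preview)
Your proposal is correct and follows essentially the same approach as the paper: both derive the result from Corollary~\ref{cor:hom} together with Theorem~\ref{thm:dag}, invoking Weinbaum's theorem to ensure the one-relator group $G=\Gp\pre{X}{w=1}$ has property~$(\dagger)$. The paper organises the argument as two separate implications and adds the aside (via \cite[Theorem~4.1]{IMM}) that $M$ is already known to be $E$-unitary, but this is not needed for the logical skeleton you describe.
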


\begin{proof} 
First note that by \cite[Theorem 4.1]{IMM} the inverse monoid $M$ is $E$-unitary. Since its maximum group quotient is $G=\Gp\pre{X}{w=1}$, it is a quotient of $M(G,X)$. It is thus strongly $F$-inverse if and only if it satisfies all the defining relations of $M_{sF}(G,X)$. The result now follows from the previous theorem aided by 
\cite[Theorem 2]{W2}.
\end{proof}

Still working under the assumption that the maximum group image of a special inverse monoid has the property $(\dagger)$, we have the following more ``operative'' 
criterion that helps us in determining whether the inverse monoid in question has the strong $F$-inverse property. Namely, let $w=a_1a_2\cdots a_n$ be a word 
over $\ol{X}$ and let $M$ be an $X$-generated inverse monoid. For a pair of adjacent letters $a_j,a_{j+1}\in\ol{X}$ we say that it is \emph{matching} with 
respect to $M$ if 
$$r(a_j) = a_j^{-1}a_j = a_{j+1}a_{j+1}^{-1} = d(a_{j+1})$$ 
holds in $M$. (Upon recalling the facts from Subsection \ref{subsec:basic}, the meaning of this condition is that, under the embedding of $M$ into an appropriate 
symmetric inverse monoid, partial injection representing $a_j$ has range that is precisely the same as the domain of that representing $a_{j+1}$.) 
The word $w$ is \emph{linked} with respect to $M$ if every pair of its adjacent letters is matching with respect to $M$.

\begin{pro}\label{pro:linked}
Let $G=\Gp\pre{X}{w_i=1\; (i\in I)}$ be a presentation of the group $G$ with the property that each of its relator words $w_i$ is cyclic. Then the special 
inverse monoid $M=\Inv\pre{X}{w_i=1\; (i\in I)}$ satisfies all the relations from Theorem \ref{thm:pre2} if and only if each word $w_i$, $i\in I$, is linked 
with respect to $M$.

In particular, a one-relator special inverse monoid $M=\Inv\pre{X}{w=1}$, where $w$ is cyclically reduced, is strongly $F$-inverse if and only if $w$ is linked 
with respect to $M$.
\end{pro}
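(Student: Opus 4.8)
The plan is to establish the displayed equivalence, from which the ``in particular'' clause follows at once by Corollary~\ref{cor:one-rel}, since a cyclically reduced one--relator word is cyclic with respect to its own group presentation (Weinbaum~\cite{W2}), so that presentation witnesses~$(\dagger)$. Two elementary facts about an arbitrary inverse monoid $N$ will be the workhorses. \emph{Cancellation against the identity}: if $fg=1$ in $N$ then $g=f^{-1}$ --- indeed $1=(fg)(fg)^{-1}=f\,d(g)\,f^{-1}\le d(f)\le 1$ forces $d(f)=1$, whence $f^{-1}=r(f)g\le g$, and the symmetric argument applied to $g^{-1}f^{-1}=1$ yields $g\le f^{-1}$. \emph{Telescoping}: if $r(f)=d(g)$ then $d(fg)=d(f)$ and $r(fg)=r(g)$; consequently, if $a_1\cdots a_m$ is linked with respect to $N$, then $d$ and $r$ of the element represented by any factor $a_p\cdots a_q$ are $d(a_p)$ and $r(a_q)$, respectively.

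For the implication ``each $w_i$ is linked $\Rightarrow$ the relations of Theorem~\ref{thm:dag} hold in $M$'', fix a relator $w_i=a_1\cdots a_n$ (the case $n=1$ being vacuous). A relation of Theorem~\ref{thm:dag} for $w_i$ asserts that the word along one of the two arcs into which a choice of two distinct positions splits the cyclic word $w_i$ equals the inverse of the word along the complementary arc. If one of the two positions is the base point of $w_i$, the relation reads $[a_1\cdots a_p]_M=[a_{p+1}\cdots a_n]_M^{-1}$ for some $1\le p\le n-1$, which is immediate from $[w_i]_M=1$ and cancellation. Otherwise the two arcs are $a_{p+1}\cdots a_q$ and $a_{q+1}\cdots a_n a_1\cdots a_p$ with $1\le p<q\le n-1$; here one computes, using cancellation and then associativity,
\[
[a_{q+1}\cdots a_n a_1\cdots a_p]_M=[a_1\cdots a_q]_M^{-1}[a_1\cdots a_p]_M=[a_{p+1}\cdots a_q]_M^{-1}\big([a_1\cdots a_p]_M^{-1}[a_1\cdots a_p]_M\big),
\]
and the bracketed factor equals $r(a_p)=d(a_{p+1})$ by linkedness; since $d(a_{p+1})=r\big([a_{p+1}\cdots a_q]_M^{-1}\big)$ by telescoping, this idempotent is absorbed on the right, leaving the value $[a_{p+1}\cdots a_q]_M^{-1}$ and hence the relation $[a_{p+1}\cdots a_q]_M=[a_{q+1}\cdots a_n a_1\cdots a_p]_M^{-1}$, as required. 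Throughout one checks that the subwords occurring are non--empty (so that Theorem~\ref{thm:dag} genuinely applies) and that every matching condition invoked concerns a position $\le n-1$.

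For the converse ``the relations of Theorem~\ref{thm:dag} hold in $M$ $\Rightarrow$ each $w_i$ is linked'', fix $w_i=a_1\cdots a_n$ with $n\ge 2$ and an index $k$ with $1\le k\le n-1$; I must show $r(a_k)=d(a_{k+1})$. I would apply the given relations to the single cyclic conjugate $a_{k+1}a_{k+2}\cdots a_n a_1\cdots a_k$ under two interior cuts --- one just after $a_{k+1}$, one just before $a_k$ --- which, writing $C=a_{k+2}\cdots a_n a_1\cdots a_{k-1}$ (read cyclically, empty stretches allowed), give $[a_{k+1}]_M^{-1}=[C]_M[a_k]_M$ and $[a_{k+1}]_M[C]_M=[a_k]_M^{-1}$; hence
\[
d(a_{k+1})=[a_{k+1}]_M[a_{k+1}]_M^{-1}=[a_{k+1}]_M[C]_M[a_k]_M=[a_k]_M^{-1}[a_k]_M=r(a_k),
\]
which is the matching condition at $k$, so $w_i$ is linked.

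The elementary lemmas, the non--emptiness bookkeeping, and the index arithmetic of cyclic rotations are routine. The genuine difficulty --- and the step I expect to require the most care --- is the first implication: the relations of Theorem~\ref{thm:dag} range over \emph{all} cyclic conjugates of $w_i$, whereas in $M=\Inv\pre{X}{w_i=1\ (i\in I)}$ only $w_i$ itself is known to represent $1$, so cancellation handles only cuts passing through the base point; it is precisely linkedness, funnelled in through the telescoping of domains and ranges, that transports the single identity $[w_i]_M=1$ to the remaining cuts of an arbitrary rotation.
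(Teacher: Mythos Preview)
Your proof is correct and follows essentially the same approach as the paper's: your ``telescoping'' is exactly the paper's Claim, and both directions proceed by combining it with the defining relations in the same way (for $(\Leftarrow)$ the paper writes $w_i=uzv$ and deduces $z=u^{-1}uzvv^{-1}=u^{-1}v^{-1}$ directly, which amounts to your computation without separating off the base-point case; for $(\Rightarrow)$ the paper uses three relations to get $a_j=a_ja_{j+1}a_{j+1}^{-1}$ and $a_{j+1}=a_j^{-1}a_ja_{j+1}$, whereas you reach $r(a_k)=d(a_{k+1})$ in one line from two relations of the same rotation---a minor streamlining of the same idea). The ``in particular'' clause is handled identically via Corollary~\ref{cor:one-rel} and Weinbaum's theorem.
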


\begin{proof}
($\Rightarrow$)
Assume that for some $i\in I$ we have $w_i=ua_ja_{j+1}v$ where some of the words $u,v\in\ol{X}^*$ may be empty and $a_j,a_{j+1}\in\ol{X}$. Since $w_i$ is by 
assumption cyclic in $G$, it follows that in $M$ we must have
$$
a_ja_{j+1} = (vu)^{-1}.
$$
For the same reason, we have that
\begin{align*}
a_j &= (a_{j+1}vu)^{-1} = (vu)^{-1}a_{j+1}^{-1} = a_ja_{j+1}a_{j+1}^{-1}, \\
a_{j+1} &= (vua_j)^{-1} = a_j^{-1}(vu)^{-1} = a_j^{-1}a_ja_{j+1}
\end{align*}
also holds in $M$. Therefore, $a_j^{-1}a_j = a_j^{-1}a_ja_{j+1}a_{j+1}^{-1} = a_{j+1}a_{j+1}^{-1}$.

($\Leftarrow$) For this direction and for later use, it is convenient to record the following
\begin{claim}
If the word $w=a_1a_2\cdots a_n$ is linked in an inverse monoid $M$ and $z=a_j\cdots a_k$ is a subword of $w$, $1\leq j\leq k\leq n$, then $zz^{-1}=a_ja_j^{-1}$ 
and $z^{-1}z = a_k^{-1}a_k$ holds in $M$. 
\end{claim}

\begin{proof}[Proof of Claim.]
We have
$$
zz^{-1} = a_j\cdots a_k(a_j\cdots a_k)^{-1} = a_j\cdots a_ka_k^{-1}\cdots a_j^{-1}.
$$
By the linked condition, $a_ka_k^{-1} = a_{k-1}^{-1}a_{k-1}$ holds in $M$, which yields
$$
zz^{-1} = a_j\cdots a_{k-1}\left(a_{k-1}^{-1}a_{k-1}\right)a_{k-1}^{-1}\cdots a_j^{-1} = a_j\cdots a_{k-1}a_{k-1}^{-1}\cdots a_j^{-1}.
$$
Proceeding inductively, we arrive at $zz^{-1}=a_ja_j^{-1}$, as required. The other equality is proved analogously.
\end{proof}

Now let us assume that $w_i=uzv$ for some words $u,v,z\in\ol{X}^*$ (of which some of $u,v$ may be empty; we assume here they are not, the  ``degenerate'' cases 
being treated similarly). Let $a,b$ be the first and the last letter of $z$, respectively, let $c$ be the last letter of $u$ and $d$ the first letter of $v$. 
Then $z=az'$ for some word $z'$ and so 
$$
z=az'=aa^{-1}az'=c^{-1}caz'=c^{-1}cz
$$
holds in $M$ by the assumed linked condition for $w_i$. However, the previous claim implies that $c^{-1}c=u^{-1}u$ holds in $M$, so we deduce $z=u^{-1}uz$. 
Analogously, we must have $z=zvv^{-1}$ in $M$, implying
$$
z = u^{-1}uzvv^{-1} = u^{-1}v^{-1} = (vu)^{-1},
$$
as $w_i=uzv=1$ holds in $M$. Thus the required conclusions follow, with the one about the one-relator case being a consequence of Corollary \ref{cor:one-rel}.
\end{proof}

\begin{cor}
 Let $|X|=1$ and $w\in\ol{X}^*$ be a non-empty cyclically reduced word. If $M=\Inv\pre{X}{w=1}$ is strongly $F$-inverse then $M$ is a cyclic group.
\end{cor}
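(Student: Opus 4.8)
The plan is to run the characterisation of Proposition~\ref{pro:linked} through the one-generator case, where it becomes very rigid. Write $X=\{x\}$. Since $w$ is a (nonempty) cyclically reduced word over $\overline{X}=\{x,x^{-1}\}$, it contains no subword $xx^{-1}$ or $x^{-1}x$, hence is a power of a single letter; applying the involution-monoid automorphism of $\overline{X}^*$ interchanging $x$ and $x^{-1}$ (which induces an isomorphism of the resulting one-relator inverse monoids) we may assume $w=x^n$ with $n\ge 1$. If $n=1$ the defining relation is $x=1$, so $M$ is the trivial group $C_1$ and we are done; assume $n\ge 2$. By Proposition~\ref{pro:linked}, $M=\Inv\pre{x}{x^n=1}$ is $F$-inverse if and only if $w=x^n$ is linked with respect to $M$, and since every letter of $w$ equals $x$, the linked condition amounts to the single requirement that the pair $(x,x)$ be matching, i.e.\ that $r(x)=x^{-1}x=xx^{-1}=d(x)$ hold in $M$.

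So suppose $M$ is $F$-inverse and put $e:=xx^{-1}=x^{-1}x\in E(M)$. Then $e$ commutes with $x$, since $xe=x(x^{-1}x)=(xx^{-1})x=ex$. An easy induction on $k\ge 1$ now gives $x^{k}x^{-k}=e$ in $M$: the base case is the definition of $e$, and $x^{k+1}x^{-(k+1)}=x\,(x^{k}x^{-k})\,x^{-1}=xex^{-1}=e\,xx^{-1}=e^2=e$. Taking $k=n$ and using the defining relation $[x^n]_M=1_M$ (so that $x^nx^{-n}=1_M$) we obtain $e=1_M$, that is, $xx^{-1}=1_M=x^{-1}x$. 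Hence $[x]_M$ is a unit of $M$; as $M$ is generated as an inverse monoid by the single unit $[x]_M$, it coincides with its group of units and is therefore a group, necessarily cyclic because it is one-generated. Chasing the defining relation (or noting $M/\sigma=C_n$) identifies it as $C_n$.

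There is no real obstacle here: once Proposition~\ref{pro:linked} is available, the only care needed is the bookkeeping of degenerate cases (the empty/length-one relator and the $x\leftrightarrow x^{-1}$ symmetry) and the trivial commutation induction above. In fact an even more direct route bypasses the $F$-inverse hypothesis altogether: in $\Inv\pre{x}{x^n=1}$ the relation already yields $[x]_M\,[x]_M^{\,n-1}=[x]_M^{\,n-1}\,[x]_M=1_M$, so $[x]_M$ is a unit and $M\cong C_n$ regardless; thus the corollary is best viewed as a consistency check of the preceding theory in the simplest possible situation.
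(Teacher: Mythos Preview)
Your proof is correct and follows essentially the same route as the paper's: reduce to $w=x^n$, invoke Proposition~\ref{pro:linked} to obtain $x^{-1}x=xx^{-1}$, and conclude that $x$ is a unit. The paper reaches $xx^{-1}=1$ a touch more directly (since $x\cdot x^{n-1}=1$ already gives $x\,\R\,1$, hence $xx^{-1}=1$), whereas you run the small induction $x^kx^{-k}=e$; both are fine. Your closing observation that the $F$-inverse hypothesis is in fact superfluous---because $x\cdot x^{n-1}=x^{n-1}\cdot x=1$ already makes $x$ a two-sided unit, so $\Inv\pre{x}{x^n=1}\cong C_n$ unconditionally---is correct and is a point the paper does not make explicit.
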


\begin{proof}
Let $X=\{a\}$. Without loss of generality, we assume that $a$ is the first letter of $w$. Since $w$ is reduced, $w=a^k$ for some $k$. If $k\geq 2$, the linked condition 
implies that $a^{-1}a = aa^{-1} =1$, that is, $a$ is invertible and $M$ is a cyclic group (and if $k=1$, $M$ is a trivial group). 
\end{proof}

From now on we assume that $|X|\geq 2$.

\begin{lem}
Let $w\in\ol{X}^*$ be a cyclically reduced word and $M=\Inv\pre{X}{w=1}$. Assume that $M$ is strongly $F$-inverse and let $w=w_1\cdots w_k$ be the decomposition 
of $w$ into minimal invertible pieces.
\begin{enumerate}
\item If the letter $a\in\ol{X}$ is the first letter of some piece then each occurrence of $a$ in $w$ is the first letter of some piece, and each occurrence  
of $a^{-1}$ in $w$ is the last letter of some piece.
\item If the letter $a\in\ol{X}$ is the last letter of some piece then each occurrence of $a$ in $w$ is the last letter of some piece, and each occurrence 
of $a^{-1}$ in $w$ is the first letter of some piece.
\item If the letter $a\in\ol{X}$ is a middle letter of some piece then each occurrence of both $a$ and $a^{-1}$ in $w$ is a middle letter of some piece.
\end{enumerate}
\end{lem}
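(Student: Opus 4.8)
Write $w = a_1 a_2 \cdots a_n$ with $a_t \in \ol{X}$. The plan is to reduce all three statements to a single description of the ``piece boundaries'' of $w$ purely in terms of the two idempotents $aa^{-1}$ and $a^{-1}a$ associated with a single letter $a \in \ol{X}$, and then to exploit the symmetry $a \leftrightarrow a^{-1}$. First I would record that, since $M$ is $F$-inverse, Proposition~\ref{pro:linked} makes $w$ linked in $M$, so the Claim inside its proof applies to $w$ and all of its subwords: for a subword $z = a_j \cdots a_k$ of $w$ we have $zz^{-1} = a_j a_j^{-1}$ and $z^{-1}z = a_k^{-1}a_k$ in $M$. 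Taking $z = w$ and using $[w]_M = 1$ gives $a_1 a_1^{-1} = 1$ and $a_n^{-1}a_n = 1$ in $M$; taking $z = a_1\cdots a_i$ a prefix gives $d(z) = a_1 a_1^{-1} = 1$ and $r(z) = a_i^{-1}a_i$. Hence the prefix $a_1\cdots a_i$ represents an invertible element of $M$ if and only if $a_i^{-1}a_i = 1$ holds in $M$ (the empty prefix, $i = 0$, being invertible).

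Next I would translate this into the piece decomposition $w = w_1\cdots w_k$. Since a product of invertible elements is invertible and every invertible prefix splits off an invertible suffix, the finest factorisation of $w$ into invertible factors is obtained by cutting $w$ at exactly those positions $i$ ($0 \le i \le n$) for which $a_1\cdots a_i$ is invertible, and the linked condition shows each resulting segment is a minimal invertible piece (a proper prefix, resp.\ proper suffix, of a segment is non-invertible precisely because the intermediate position is not a cut position). Combined with the first step, this yields, for $1 \le i \le n$: the letter $a_i$ is the last letter of its piece if and only if $a_i^{-1}a_i = 1$ in $M$, and $a_i$ is the first letter of its piece if and only if $a_i a_i^{-1} = 1$ in $M$ --- for $i \ge 2$ one passes from ``$a_1\cdots a_{i-1}$ invertible'' to ``$a_i a_i^{-1} = 1$'' via the matching identity $a_i a_i^{-1} = a_{i-1}^{-1}a_{i-1}$, while for $i = 1$ this is the identity $a_1 a_1^{-1} = 1$ already noted. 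Hence $a_i$ is a middle letter of its piece if and only if $a_i a_i^{-1} \ne 1$ and $a_i^{-1}a_i \ne 1$ in $M$. This bookkeeping step --- confirming that ``finest invertible factorisation'' coincides with ``cut at all invertible-prefix positions'', that the segments so obtained are exactly the minimal pieces, and that the boundary indices $i=1$, $i=n$ and the direction of the matching identity are handled correctly --- is the only genuinely delicate part; everything else is formal manipulation with $d$, $r$ and the involution.

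Finally, the three assertions drop out by symmetry. The equalities ``$aa^{-1} = 1$'' and ``$a^{-1}a = 1$'' are properties of the generator $a \in \ol{X}$ in $M$, independent of where $a$ occurs in $w$; and in any inverse monoid $(a^{-1})(a^{-1})^{-1} = a^{-1}a$ and $(a^{-1})^{-1}(a^{-1}) = a a^{-1}$, so applying the previous step to the letter $a^{-1}$ shows that $a^{-1}$ is the last letter of its piece iff $aa^{-1} = 1$, and the first letter of its piece iff $a^{-1}a = 1$. Therefore, if $a$ is the first letter of some piece then $aa^{-1} = 1$ in $M$, whence every occurrence of $a$ in $w$ is a first letter of some piece and every occurrence of $a^{-1}$ is a last letter of some piece, proving (1); part (2) is the mirror statement obtained by interchanging $aa^{-1}$ with $a^{-1}a$; and for (3), if $a$ is a middle letter of some piece then $aa^{-1} \ne 1 \ne a^{-1}a$, a condition unchanged under $a \mapsto a^{-1}$, so every occurrence of $a$ and of $a^{-1}$ in $w$ is a middle letter of some piece.
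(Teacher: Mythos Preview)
Your proof is correct and rests on the same tools as the paper's (Proposition~\ref{pro:linked} and the Claim in its proof), but the logical organisation is different. The paper argues part~(1) directly by contradiction: if $a$ is the first letter of some piece and also occurs in a piece $uav$ with $u$ nonempty, then the Claim gives $u^{-1}u=aa^{-1}$, while right-invertibility of prefixes of invertible elements gives $aa^{-1}=uu^{-1}=1$, making $u$ invertible and contradicting minimality of $uav$; parts~(2) and~(3) are then dispatched by duality. You instead extract an explicit characterisation --- $a_i$ is the first (resp.\ last) letter of its piece iff $a_ia_i^{-1}=1$ (resp.\ $a_i^{-1}a_i=1$) in $M$ --- by identifying the cut positions of the piece decomposition with exactly the invertible-prefix positions, and then read off all three parts uniformly from the symmetry $a\leftrightarrow a^{-1}$. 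Your route gives a cleaner structural statement (the iff characterisation is a minor strengthening worth having) at the cost of the extra bookkeeping you flag; the paper's route is shorter but handles the three cases separately. One small quibble: your phrase ``the linked condition shows each resulting segment is a minimal invertible piece'' is slightly off --- minimality of the segments follows purely from the cut-position description, not from the linked condition (which you only need for the passage from $a_{i-1}^{-1}a_{i-1}=1$ to $a_ia_i^{-1}=1$).
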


\begin{proof}
Assume $a$ has an occurrence in $w$ as the first letter of some piece and, at the same time, there is a piece of the form $uav$ where $u$ is not empty.
Since $M$ is strongly $F$-inverse, by Proposition \ref{pro:linked} and the claim contained in its proof it follows that $u^{-1}u=aa^{-1}$ holds in $M$. 
On the other hand, since both $a$ and $u$ are prefixes of an invertible piece, we have that $aa^{-1}=uu^{-1}=1$ holds in $M$, so we deduce that $u$ itself 
represents and invertible element of $M$. This is a contradiction with the minimality of the piece $uav$; similarly we get a contradiction if $a^{-1}$
occurs in a position that is not the last letter of some piece. The assertion (2) is dual to (1), and (3) follows from (1) and (2).
\end{proof}

\begin{lem}\label{lem:piece2}
Let $w\in\ol{X}^*$ be a cyclically reduced word and $M=\Inv\pre{X}{w=1}$. Assume that $M$ is strongly $F$-inverse and let $w=w_1\cdots w_k$ be the 
decomposition of $w$ into minimal invertible pieces. Then every invertible piece contains at most two letters.
\end{lem}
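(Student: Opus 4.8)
The plan is to argue by contradiction: suppose some minimal invertible piece $w_\ell$ of $w$ has length at least three, say $w_\ell = a z b$ where $a,b\in\ol{X}$ are its first and last letters and $z$ is a non-empty word (the ``middle'') of length at least one. Since $M$ is $F$-inverse, Proposition \ref{pro:linked} tells us that $w$ is linked with respect to $M$, and the Claim inside its proof gives us precise control over the ranges and domains of subwords: for any subword $y = a_j\cdots a_k$ of $w$ we have $yy^{-1}=a_ja_j^{-1}$ and $y^{-1}y=a_k^{-1}a_k$ in $M$. I would apply this to the prefix $a z'$ and suffix $z'' b$ of the middle, and combine it with the fact that $a$, $az$, $ab$-type prefixes and suffixes of the invertible piece $w_\ell = azb$ represent right units (respectively left units) of $M$, since proper prefixes of an invertible piece are right-invertible and proper suffixes are left-invertible — a standard fact about pieces that follows from \cite[Proposition 4.2]{IMM} and the minimality in the definition of pieces, combined with the discussion in Subsection \ref{subsec:pieces}.

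The key point is this. Write $z = c z_0$ if $z$ has length $\geq 2$, where $c$ is the first letter of $z$; then the prefix $ac$ of $w_\ell$ properly extends $a$, so both $a$ and $ac$ are proper prefixes of an invertible piece and hence $aa^{-1}=(ac)(ac)^{-1}=1$ in $M$. The linked condition forces $a^{-1}a = cc^{-1}$, and the previous Lemma tells us $a$ being a first letter of a piece means every occurrence of $a$ begins a piece and every occurrence of $a^{-1}$ ends a piece. Now I would look at the last letter of $z$, call it $e$, and the suffix $eb$ of $w_\ell$: since $z$ and $zb$ (equivalently the suffixes $e$ and $eb$... here I should be careful) are involved, the linked condition at the junction $e \mid b$ gives $e^{-1}e = bb^{-1}$. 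But $b$ is the last letter of the piece $w_\ell$, so by the previous Lemma every occurrence of $b$ ends a piece and every occurrence of $b^{-1}$ begins a piece. The contradiction I am chasing comes from showing that the middle letter $c$ (or $e$) must then simultaneously satisfy the constraints forced on a first-letter (or last-letter) of a piece and on a genuine middle letter, which by part (3) of the previous Lemma are incompatible — a middle letter $c$ has every occurrence of both $c$ and $c^{-1}$ in the middle of a piece, whereas the equation $a^{-1}a = cc^{-1}$ together with $a^{-1}a$ being an idempotent that, as the range of the right unit-related element, behaves like a genuine ``boundary'' idempotent, forces $c$ to behave like a first letter. More concretely, I expect to show $cc^{-1} = a^{-1}a$, and separately that $a^{-1}a$ is the identity-or-a-proper-idempotent in a way that makes $c$ a right unit or forces $z$ itself, or a proper prefix of $w_\ell$ strictly containing $a$, to be invertible, contradicting minimality of $w_\ell$ exactly as in the previous Lemma's proof.

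Concretely, here is the cleanest route I would try first. Assume $w_\ell = azb$ with $z$ non-empty, and let $c$ be the first letter of $z$. The prefix $a$ is a proper prefix of the piece, hence right-invertible: $aa^{-1}=1$. The prefix $ac$ is also a proper prefix (since $|z|\geq 1$ means $az$ is a proper prefix, and $ac$ is a prefix of $az$, hence itself a proper prefix of $w_\ell$ when $|w_\ell| \geq 3$), hence $ac$ is right-invertible: $(ac)(ac)^{-1}=1$, i.e.\ $a c c^{-1} a^{-1} = 1$. Multiplying on the left by $a^{-1}$ and on the right by $a$ and using $aa^{-1}=1$ we get $a^{-1}a \cdot cc^{-1} \cdot = a^{-1} a$ wait — I need $a^{-1}(acc^{-1}a^{-1})a = a^{-1}a$, which gives $cc^{-1}a^{-1}a = a^{-1}a$, i.e.\ $a^{-1}a \leq cc^{-1}$ in the natural order — equivalently $a^{-1}a \, cc^{-1} = a^{-1}a$. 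On the other hand the linked condition at the junction $a\mid c$ (these are adjacent letters $a_j a_{j+1}$ inside $w$, since $w_\ell$ is a factor of $w$ and $ac$ sits inside it) gives $a^{-1}a = cc^{-1}$. So both hold; no contradiction yet, they are consistent. The real contradiction must come by iterating down the piece: I would show by induction along $z$, using the linked condition at each junction together with right-invertibility of each proper prefix, that $a^{-1}a = cc^{-1}$ propagates to say the full middle is ``degenerate'', ultimately yielding that the prefix $az'$ of $w_\ell$ (for $z'$ a proper non-empty prefix of $z$) is invertible — i.e.\ $az'$ is both right-invertible (as a proper prefix) and left-invertible — contradicting minimality of the piece $w_\ell$ since $az'$ is a proper non-empty prefix representing an invertible element. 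The main obstacle I anticipate is getting the left-invertibility: right-invertibility of proper prefixes is automatic, but to contradict minimality I need some proper prefix to also be left-invertible (equivalently to have trivial range idempotent $r(\cdot)=1$), and squeezing left-invertibility out of the linked/matching conditions plus the global relation $w_\ell = azb$ with $b$ a last letter of a piece is the delicate part; I would likely need to invoke the previous Lemma's conclusions about $b$ (every occurrence of $b^{-1}$ begins a piece) and about $a$ together, and possibly the Claim applied to $z$ itself to identify $z z^{-1}$ and $z^{-1}z$, then transport this back through the equation $ab$-related computations $a z b = 1$-in-the-cyclic-word sense, i.e.\ $azb = (\text{rest of } w)^{-1}$ being a left unit since $w_\ell$ is invertible.
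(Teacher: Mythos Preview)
Your proposal has a genuine gap that you yourself identify but do not resolve: you need a proper prefix of $w_\ell$ to be \emph{left}-invertible (equivalently, some middle letter $c$ to satisfy $c^{-1}c=1$ or $cc^{-1}=1$), and nothing in your argument produces this. In fact, the equations you can extract internally from the linked condition and from invertibility of $w_\ell=azb$ are all mutually consistent with $|w_\ell|\geq 3$. For instance, using the Claim one finds $(az)^{-1}(az)=z^{-1}z=bb^{-1}$ and $(zb)(zb)^{-1}=zz^{-1}=a^{-1}a$; so $az$ is left-invertible iff $bb^{-1}=1$ iff $b$ is right-invertible, and $zb$ is right-invertible iff $a^{-1}a=1$ iff $a$ is left-invertible. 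Minimality of the piece tells you precisely that $a$ is \emph{not} left-invertible and $b$ is \emph{not} right-invertible, so you are chasing your tail. The trichotomy of the previous lemma constrains \emph{positions} of letters in $w$, not equations in $M$, and does not by itself force any middle letter to be a unit.

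The paper's proof takes an entirely different, external route: it shows that the linked equation $a^{-1}a=cc^{-1}$ \emph{cannot} be a consequence of the single relation $w=1$, by exhibiting a concrete inverse monoid satisfying $w=1$ in which this equation fails. First one uses the previous lemma to define a retraction $\psi$ of the alphabet onto $\{a,c,a^{-1},c^{-1},1\}$ (first letters $\mapsto a$, last letters $\mapsto a^{-1}$, middle letters $\mapsto c$), obtaining a homomorphism $\widehat\psi:M\to M_\psi=\Inv\langle a,c\mid \ol\psi(w)=1\rangle$; every piece is sent to a word of the form $aza^{-1}$ with $z\in\{c,c^{-1}\}^+$. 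Then one realises $M_\psi$ inside $\mathcal{I}_{\N}$ via $\tilde a(n)=2n$ and $\tilde c=\id_{2\N\cup\{1\}}$: each $\tilde a\,\tilde z\,\tilde a^{-1}$ is the identity on $\N$, so the defining relation holds, yet $r(\tilde a)=\id_{2\N}\neq\id_{2\N\cup\{1\}}=d(\tilde c)$. Since the linked condition in $M$ would force $r(a)=d(c)$ in every homomorphic image, this contradicts $M$ being $F$-inverse. The missing idea in your attempt is exactly this passage to a quotient with an explicit partial-bijection model; the contradiction is not obtainable by internal manipulation of the relations.
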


\begin{proof}
Seeking a contradiction, assume that there is a piece $w_i$ containing at least three letters, $w_i=aub$ for some non-empty word $u$ and $a,b\in\ol{X}$. By the
previous lemma, $u$ can contain neither of the letters $a,a^{-1},b,b^{-1}$; let $c$ be an arbitrary letter occurring in $u$. We define a map $\psi:\ol{X}\to
\{a,a^{-1},c,c^{-1},1\}$ as follows:
\begin{itemize}
\item if $x\in\ol{X}$ occurs as the first letter in some invertible piece, set $\psi(x)=a$ and $\psi(x^{-1})=a^{-1}$;
\item if $x\in\ol{X}$ occurs as the last letter in some invertible piece, set $\psi(x)=a^{-1}$ and $\psi(x^{-1})=a$;
\item if $x\in\ol{X}$ occurs as a middle letter of some invertible piece, set $\psi(x)=c$ and $\psi(x^{-1})=c^{-1}$;
\item if $x\in\ol{X}$ is such that neither $x$ nor $x^{-1}$ occurs in $w$, set $\psi(x)=\psi(x^{-1})=1$.
\end{itemize}
By the previous lemma, this map is well defined (i.e.\ the above definition is consistent) and total. Hence, $\psi$ can be extended to a homomorphism of 
free inverse monoids $\ol\psi: FIM(X) \to FIM(\{a,c\})$ (and in fact to a homomorphism of free involution monoids $\ol{X}^*\to\{a,a^{-1},c,c^{-1}\}^*$
which we will by a slight abuse of notation also denote by $\ol\psi$), and also there is a homomorphism $\widehat\psi: M \to M_\psi=\Inv\pre{a,c}{\ol\psi(w)=1}$ 
such that the diagram
$$
\xymatrix@C+2pc{FIM(X) \ar[r]^{\ol\psi} \ar[d]_{\nu} & FIM(\{a,c\}) \ar[d]^{\nu'} \\
M \ar[r]_{\widehat\psi} & M_\psi}
$$
commutes, where $\nu,\nu'$ are the corresponding canonical homomorphisms. Since $M$ is assumed to be strongly $F$-inverse, the word $w$ is linked with respect 
to $M$, so $\ol\psi(w)$ must be linked with respect to $M_\psi$ as the assumption that $x^{-1}x=yy^{-1}$ holds in $M$ for any adjacent pair of letters 
$x,y\in\ol{X}$ in $w$ implies that $\psi(x)^{-1}\psi(x)=\ol\psi(x^{-1}x)=\ol\psi(yy^{-1})=\psi(y)\psi(y)^{-1}$ holds in $M_\psi$ and, by definition, no letter 
appearing in $w$ is mapped to the empty word (thus $\psi(x)$ and $\psi(y)$ are adjacent in $\ol\psi(w)$). Note that $\ol\psi(w)$ is in general not 
(cyclically) reduced as for all $1\leq i\leq k$, $\psi(w_i)$ has the form $aza^{-1}$ for some non-empty word $z$ over $\{c,c^{-1}\}$. So, either 
$a,c$ or $a,c^{-1}$ are adjacent in $\ol\psi(w)$.

Now consider two partial injections from $\mathcal{I}_\N$: let $\tilde{a}(n)=2n$ for all $n\in\N$, and let $\tilde{c}$ be the identity map on $2\N\cup\{1\}$.
Let $S$ be the submonoid of $\mathcal{I}_\N$ generated by these two maps. Then for all $1\leq i\leq k$, the substitution $a\mapsto\tilde{a}$, $c\mapsto\tilde{c}$,
takes every word of the form $\psi(w_i)$ to the identity map on $\N$, and thus $S$ satisfies $\psi(w)=1$ with respect to the specified choice of generators.
Therefore, the previous substitution extends to a surjective homomorphism $\varphi:M_\psi\to S$. However, $\tilde{a}$ does not match with $\tilde{c}=
\tilde{c}^{-1}$, because $r(\tilde{a})$ is the identity map on $2\N$ while $d(\tilde{c})$ coincides with $\tilde{c}$, the identity map on $2\N\cup\{1\}$.
This is a contradiction with the assumption that the word $\ol\psi(w)$ is linked and the existence of the homomorphism $\varphi$; hence, the lemma follows.
\end{proof}

Now we are in a position to state and prove the announced characterisation of strongly $F$-inverse one-relator special inverse monoids with a cyclically 
reduced relator word.

\begin{thm}\label{thm:char}
Let $w\in\ol{X}^*$ be a cyclically reduced word and $M=\Inv\pre{X}{w=1}$. The special inverse monoid $M$ is strongly $F$-inverse if and only if every piece 
$w_i$ in the decomposition $w=w_1\cdots w_k$ of $w$ into minimal invertible pieces has length at most two.
\end{thm}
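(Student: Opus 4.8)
The plan is to prove both implications by reducing to the machinery already developed, principally Proposition~\ref{pro:linked} (the ``linked'' criterion) and the two lemmas preceding this theorem. For the ``only if'' direction, suppose $M$ is $F$-inverse; then the second lemma above states precisely that every piece $w_i$ has length at most two, so there is nothing left to do here — this direction is an immediate quotation of the last lemma. The substance of the theorem therefore lies in the ``if'' direction: assuming every piece has length at most two, we must show $M$ is $F$-inverse, which by Corollary~\ref{cor:one-rel} (or Proposition~\ref{pro:linked}, since the maximum group image $\Gp\pre{X}{w=1}$ has property $(\dagger)$ by Weinbaum's theorem \cite{W2}) amounts to showing that $w$ is linked with respect to $M$.

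So the real task is: given the piece decomposition $w = w_1 \cdots w_k$ with each $|w_i| \le 2$, verify that each adjacent pair of letters $a_j a_{j+1}$ in $w$ is matching, i.e.\ $a_j^{-1} a_j = a_{j+1} a_{j+1}^{-1}$ in $M$. I would split this into two cases according to where the boundary between $a_j$ and $a_{j+1}$ falls. \emph{Case 1: $a_j$ and $a_{j+1}$ lie in the same piece $w_i$.} Since $|w_i| \le 2$, this forces $w_i = a_j a_{j+1}$, a piece of length exactly two. Because $w_i$ represents an invertible element of $M$, we have $w_i w_i^{-1} = a_j a_{j+1} a_{j+1}^{-1} a_j^{-1} = 1$ and $w_i^{-1} w_i = a_{j+1}^{-1} a_j^{-1} a_j a_{j+1} = 1$ in $M$. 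The first gives $a_j (a_{j+1} a_{j+1}^{-1}) a_j^{-1} = 1$, hence $a_j a_{j+1} a_{j+1}^{-1} = a_j a_j^{-1} a_j a_{j+1} a_{j+1}^{-1}$, and using $a_{j+1} a_{j+1}^{-1} \le a_j^{-1} a_j$ type comparisons together with the dual computation from $w_i^{-1} w_i = 1$, one extracts $a_j^{-1} a_j = a_{j+1} a_{j+1}^{-1}$. \emph{Case 2: $a_j$ is the last letter of a piece $w_i$ and $a_{j+1}$ is the first letter of the next piece $w_{i+1}$.} Here I would use that consecutive pieces, being invertible, satisfy $w_i^{-1} w_i = 1 = w_{i+1} w_{i+1}^{-1}$ in $M$; combined with the Claim in the proof of Proposition~\ref{pro:linked} (which computes $z z^{-1} = a_j a_j^{-1}$ and $z^{-1} z = a_k^{-1} a_k$ for a subword $z = a_j \cdots a_k$ of a linked word — but here I must be careful, as linkedness is what we are trying to establish), I would instead argue directly: $w_i^{-1} w_i = 1$ means the range of (the partial bijection represented by) $w_i$ is all of the relevant identity, forcing $a_j^{-1} a_j = 1 = a_{j+1} a_{j+1}^{-1}$, so the matching condition holds trivially at the seam between two pieces.

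The cleaner route for Case~2 is to observe that if $w_i$ and $w_{i+1}$ are both pieces representing units, then the element represented by $w_i$ is left-invertible (so $r(w_i) = w_i^{-1} w_i = 1$) and $w_{i+1}$ is right-invertible (so $d(w_{i+1}) = w_{i+1} w_{i+1}^{-1} = 1$); by the Claim in the proof of Proposition~\ref{pro:linked} applied \emph{within} each piece (each piece of length $\le 2$ being trivially linked once Case~1 is done, or by direct inspection), $r(w_i) = a_j^{-1} a_j$ where $a_j$ is the last letter of $w_i$, and $d(w_{i+1}) = a_{j+1} a_{j+1}^{-1}$ where $a_{j+1}$ is the first letter of $w_{i+1}$; hence $a_j^{-1} a_j = 1 = a_{j+1} a_{j+1}^{-1}$ and the pair matches. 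I expect the main obstacle to be the bookkeeping in Case~1 — extracting the clean equality $a_j^{-1} a_j = a_{j+1} a_{j+1}^{-1}$ from the two relations $a_j a_{j+1} a_{j+1}^{-1} a_j^{-1} = 1$ and $a_{j+1}^{-1} a_j^{-1} a_j a_{j+1} = 1$ — which is a short but slightly fiddly computation in the natural partial order (one shows each side is $\le$ the other, using that $e \le f$ and $f \le e$ for idempotents forces $e = f$). Once both cases are settled, $w$ is linked with respect to $M$, Proposition~\ref{pro:linked} yields that $M$ is $F$-inverse, and the proof is complete.
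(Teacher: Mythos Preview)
Your plan is correct and follows the paper's approach closely: both directions are handled identically at the structural level (the forward direction quotes the preceding lemma; the backward direction reduces via Proposition~\ref{pro:linked} to showing $w$ is linked, with the seam between pieces handled by noting that the last letter of a piece is left invertible and the first letter of the next is right invertible, so both idempotents equal $1$). The only difference is in Case~1: to derive $a^{-1}a = bb^{-1}$ for a length-two piece $ab$, the paper uses a Green's-relations argument (from $a\,\R\,1\,\L\,b$ and $ab\,\H\,1$ it produces $p,q$ with $abp=a$, $qab=b$, shows $bp$ is idempotent, and concludes $abb^{-1}=a$, $a^{-1}ab=b$), whereas your partial-order route---conjugating $abb^{-1}a^{-1}=1$ and $b^{-1}a^{-1}ab=1$ to get $a^{-1}a \le bb^{-1}$ and $bb^{-1}\le a^{-1}a$---is an equally valid and arguably more direct computation.
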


\begin{proof}
($\Rightarrow$) This is proved in the previous lemma.

($\Leftarrow$) For each $1\leq i\leq k$ let $a_i,b_i\in\ol{X}$ be the first and the last letter, respectively, of $w_i$ (if $|w_i|=1$ we set $a_i=b_i$).
Then every $a_i$ represents a right invertible element of $M$ and every $b_i$ represents a left invertible element of $M$. In other words, $a_ia_i^{-1}=
b_i^{-1}b_i=1$. So, to prove that $w$ is linked (which, by Proposition \ref{pro:linked}, suffices to show the strong $F$-inverse property of $M$), we need 
to argue that $a_i^{-1}a_i=b_ib_i^{-1}$ holds for all $i$ such that $|w_i|=2$. 

Indeed, then we have $a_i\,\R\,1\,\L\,b_i$ and $a_ib_i\,(\R\cap\L)\,1$, so there exist elements $p,q\in M$ such that $a_ib_ip=a_i$ and $qa_ib_i=b_i$. But then
$$b_ipb_i=(qa_ib_i)pb_i=q(a_ib_ip)b_i=qa_ib_i=b_i,$$ 
implying that $b_ip$ is an idempotent. This yields 
$$b_ip=b_ib_i^{-1}b_ip=b_ipb_ib_i^{-1}=b_ib_i^{-1},$$
so $a_ib_ib_i^{-1}=a_i$. Similarly, we obtain $a_i^{-1}a_ib_i=b_i$, and thus $a_i^{-1}a_i =  a_i^{-1}a_ib_ib_i^{-1} = b_ib_i^{-1}$, as required.
\end{proof}

\begin{exa}
\begin{enumerate}
\item If an $X$-generated one-relator inverse monoid $S$ is a group (for example, $S=\Inv\pre{a,b}{aba=1}$) then it is obviously strongly $F$-inverse, as then 
$S$ coincides with the group given by the same presentation, and the canonical map $M(S,X)\to S$ is in fact the homomorphism of $M(S,X)$ onto its greatest group
image (collapsing entire $\sigma$-classes into a single element). This happens if and only if each of the minimal invertible pieces of $w$ has precisely one letter.
\item Let $M=\Inv\pre{a,b}{(ab)^n=1}$.
We show that the minimal invertible pieces of $w=(ab)^n$ with respect to $M$ are precisely the factors $ab$. Since $w=1$, the word $ab$ represents an invertible
element of $M$. We show that $a$ and $b$ are not invertible; for this it suffices to find a homomorphic image of $M$ in which the images of $a$ and $b$ are 
not invertible. Let $B=\Inv\pre{c}{cc^{-1}=1}$ be the bicyclic monoid \cite{How,Law,Pet}. In the inverse monoid $B\times \mathbb{Z}_n$, where 
$\mathbb{Z}_n=\langle d\rangle$ is the cyclic group of order $n$, define $\tilde{a} = (c,d)$ and $\tilde{b} = (c^{-1},1)$. Then we have $(\tilde{a}\tilde{b})^n 
= (1,d)^n = (1,1)$. It follows that the assignment $a\mapsto \tilde{a}$, $b\mapsto \tilde{b}$ extends to a homomorphism $M\to B\times \mathbb{Z}_n$. Since 
$\tilde{a}$ is not invertible in $B\times\mathbb{Z}_n$, $a$ is not invertible in $M$. Hence, $M$ is not a group; however it is strongly $F$-inverse. 
Its greatest group image is the free product $\mathbb{Z}\ast\mathbb{Z}_n$ (with respect to $a$ and $t=ab$).
\end{enumerate}
\end{exa}


\section{Some further examples and non-examples}

We begin this closing section by the observation that a number of relations from Corollary \ref{cor:one-rel} required for a one-relator inverse monoid with 
a cyclically reduced relator word to be strongly $F$-inverse already hold in any such one-relator inverse monoid, independently of the presence of the 
property in question. This pinpoints the difference between the $F$-inverse property and its strong version.

\begin{lem}\label{lem:ineq}
Let $M=\Inv\pre{X}{w=1}$ where $w\in\ol{X}^*$ is a cyclically reduced word. Assume that $w=puq$ where some of the words $p,q$ might be empty. Then
$$
u\geq (qp)^{-1}
$$
holds in $M$. Furthermore, unless $u$ is a ``middle subword'' of some invertible piece of $w$, that is if $w_i=v'uv''$ for some invertible piece $w_i$ of $w$
and some non-empty words $v',v''$, we actually have $u = (qp)^{-1}$ holding in $M$.
\end{lem}

\begin{proof}
The relation $w=1$ implies $qpuqp=qp$. Hence, $(uqp)^2=uqp$ is an idempotent, and so
$$
qp = qpuu^{-1}uqp = qp(uu^{-1})(uqp) = qpuqp(uu^{-1}) = (qp)(u^{-1})^{-1}u^{-1},
$$
yielding $qp\leq u^{-1}$, i.e.\ $(qp)^{-1}\leq u$.

For the second part of the lemma, assume that $w=w_1\cdots w_i\cdots w_k$ is the finest factorisation of $w$ into invertible pieces, and let first $u$ be 
a prefix of $w_i$, $w_i=uv$. Then it follows from \cite[Proposition 4.2]{IMM} that $w_i\cdots w_nw_1\cdots w_{i-1}=1$ holds in $M$. Upon denoting
$u'=qp=(vw_{i+1}\cdots w_k)(w_1\cdots w_{i-1})$ we conclude that $uu'=1$ and holds in $M$ and thus $u'uu'=u'$ and $uu'u=u$. Therefore, $u=(u')^{-1}$.
An analogous conclusion follows when $u$ is a suffix of some invertible piece.

Finally, assume that $u=u'w_i\cdots w_ju''$ where $u'$ is a suffix of a piece, $u''$ is a prefix of a piece, and the product in the middle might be empty.
Then we already know that in $M$ we have $$(u')^{-1}=w_i\cdots w_ju''qp \quad  {\text{and}} \quad (u'')^{-1}=qpu'w_i\cdots w_j. 
$$ Thus we obtain
\begin{align*}
u^{-1} &= (u'')^{-1}(w_i\cdots w_j)^{-1}(u')^{-1} \\ 
& = qpu'(w_i\cdots w_j)(w_i\cdots w_j)^{-1}(w_i\cdots w_j)u''qp\\
& = q(pu'w_i\cdots w_ju''q)p = qp
\end{align*}
bearing in mind that $pu'w_i\cdots w_ju''q= puq = w_1\cdots w_k = 1$.
\end{proof}

Note that the previous lemma provides an alternative way to establish the reverse implication in Theorem \ref{thm:char}, as in the case when each invertible piece 
is of length $\leq 2$ there  are no ``middle subwords'' of pieces at all.

\begin{exa}
There exist one-relator special inverse monoids with a cyclically reduced relator word that are $F$-inverse, but not strongly. For example, consider 
$M=\Inv\pre{a,b,c}{abc=1}$. From the previous lemma it follows that in $M$ we have $a^{-1}=bc$, $c^{-1}=ab$ and $b\geq (ca)^{-1}$ (and so $b^{-1}\geq ca$). 
Also, denoting by $\tilde{a}$ and $\tilde{c}$ the same partial maps from $\mathcal{I}_{\mathbb{N}}$ as in the proof of Lemma \ref{lem:piece2}, the map 
$a\mapsto\tilde{a}$, $b\mapsto\tilde{c}$, $c\mapsto\tilde{a}^{-1}$, extends to a homomorphism from $M$ onto the inverse submonoid $S$ of $\mathcal{I}_{\mathbb{N}}$ 
generated by $\tilde{a}$ and $\tilde{c}$, as $\tilde{a}\tilde{c}\tilde{a}^{-1}$ is the identity mapping on $\mathbb{N}$. Since $\tilde{a}$ and $\tilde{c}$ 
are not invertible, it follows that $abc$ has no nontrivial invertible pieces, that is, $abc$ is the only invertible piece of itself. 
Furthermore, since $\tilde{c} \neq (\tilde{a}^{-1}\tilde{a})^{-1}$ in $S$, it follows that also $b\neq(ca)^{-1}$ in $M$.

The maximum group image of $M$ is $G=\Gp\pre{a,b,c}{abc=1}$, which is just the free group $FG(a,b)$ with a redundant generator $c=(ab)^{-1}$. The corresponding 
Cayley graph is then, similarly to the example at the very end of Section \ref{sec:univ}, a tree of cycles (``triangles'' in this case, labelled by $abc$), obtained
from the Cayley graph of $FG(a,b)$ with respect to $\{a,b\}$ by completing any path labelled by $ab$ to a 3-cycle by adding a ``backward'' edge labelled $c$.
Therefore, if we want to consider a simple path in this (enriched) Cayley graph from $1$ to an element $g\in G$ ($g\neq 1$), the latter determines a unique sequence of 
3-cycles $C_1,\dots,C_m$ such that $C_1$ contains the vertex $v_0=1$, $C_m$ contains the vertex $v_m=g$, and any two adjacent cycles in the sequence $C_i,C_{i+1}$,
$1\leq i<m$, share just a common vertex $v_i$. This means that there are actually only finitely many simple paths from $1$ to $g$: between each $v_i$ and $v_{i+1}$,
$0\leq i<m$ we have a choice between two subpaths (one of length $1$ and one of length $2$). Hence, the $\sigma$-class of $M(G,\{a,b,c\})$ mapping onto $g$ has
precisely $2^m$ maximal elements. However, we can now single out a simple path $1\leadsto g$ which is in $M$ above every other such path: namely, whenever the
two subpaths between $v_i$ and $v_{i+1}$ are labelled by $b$ and $a^{-1}c^{-1}$, or by $b^{-1}$ and $ca$, always choose the one labelled by $b$ or $b^{-1}$.
For other pairs $v_i,v_{i+1}$ of adjacent vertices the choice is immaterial (which reflects the facts that $a^{-1}=bc$ and $c^{-1}=ab$ hold in $M$), but for the sake
of definiteness always choose the one of length $1$. In this way we get the path $\Gamma_g$ (which is actually the geodesic for $g$), and so if $\phi:M(G,\{a,b,c\})
\to M$ is the canonical homomorphism and $\Pi$ an arbitrary simple path $1\leadsto g$ then $\phi(\Gamma_g,g)\geq\phi(\Pi,g)$. In other words, the $\sigma$-class
of $M$ mapping onto $g$ has a top element, and we conclude that $M$ is $F$-inverse.
\end{exa}

In fact, this example can be generalised, so that by very similar methods we get the following result.

\begin{pro}
For all $k,n\geq 1$, the inverse monoid 
$$M_{k,n}=\Inv\pre{a_1,\dots,a_k}{(a_1\cdots a_k)^n=1}$$ 
is $F$-inverse, but not strongly $F$-inverse.
\end{pro}

Not every one-relator special inverse monoid with a cyclically reduced relator word is $F$-inverse, though, and a counter-example can be found in the recent 
seminal paper \cite{KSz}. Namely, by Proposition 4.8 of that paper, in an $E$-unitary special inverse monoid any two $\sigma$-related elements have a common
upper bound. Therefore, in such an inverse monoid (and this, by \cite[Theorem 4.1]{IMM}, includes all $\Inv\pre{X}{w=1}$ such that $w$ is cyclically reduced), 
this leaves just two possibilities open for a $\sigma$-class: either (i) the $\sigma$-class in question has a unique maximum element, or (ii) it has no maximal 
elements at all, which, by Zorn's Lemma, necessarily means that it contains an ascending chain without an upper bound. Now, the same paper introduces and studies 
the geometric property of an inverse semigroup $S$ called \emph{bounded group distortion} (see \cite[Definition 4.1]{KSz}) that relates the geometry of the 
Cayley graph of the greatest group image of $S$ and that of the \emph{Sch\"utzenberger graphs} \cite{IMM,MM89,Stephen} of $S$ (which are in fact subgraphs of 
the Cayley graph of $S/\sigma$). There are \emph{local} and \emph{uniform} versions of this property, and, as shown in Theorem 4.7 of that paper, in a countably 
generated inverse monoid, the uniform bounded group distortion property is precisely equivalent to each $\sigma$-class having only finitely many maximal elements 
and no ascending chains without upper bounds (i.e.\ the set of maximal elements of each $\sigma$-class is finite and non-empty). Therefore, Theorem 4.9 of the 
same paper states that a countably generated $E$-unitary special inverse monoid is $F$-inverse if and only if it has uniformly bounded group distortion.

\begin{exa}
It follows from \cite[Theorem 4.6]{KSz} and its proof that 
$$
M=\Inv\pre{a,b,c,d}{bcb^{-1}ad^{-1}a^{-1}=1}
$$
is $E$-unitary but \emph{not} $F$-inverse, as it fails to have uniformly bounded group distortion. (Its maximum group image is the free group on $a,b$ and 
$u=bcb^{-1}$.) An alternative way to see that $M$ is not $F$-inverse is to repeatedly use Lemma \ref{lem:ineq} above to deduce the infinite chain of inequalities 
$$
b^{-1}a \leq cb^{-1}ad^{-1} \leq \dots \leq c^kb^{-1}ad^{-k} \leq \dots
$$
between elements of $(b^{-1}a)\sigma$. As N\'ora Szak\'acs \cite{Nora26} pointed out to us, all of the above inequalities are strict; one can compute the Sch\"utzenberger graphs for 
these words (which are certatin closures of the paths from $1$ along these words to their values in $M/\sigma$ in the Cayley graph of $M/\sigma$) and see that they are all different. \end{exa}

This highlights the significance of the following question.

\begin{prb}
Which cyclically reduced words $w\in\ol{X}^*$ have the property that $\Inv\pre{X}{w=1}$ is $F$-inverse?
\end{prb}

However, one-relator special inverse monoids that are (strongly) $F$-inverse are not limited just to those with a cyclically related relator word.

\begin{pro}
Let $M=\Inv\pre{X}{u_i=v_i\; (i\in I)}$ be an inverse monoid with the canonical greatest group image $FG(X)$. Then $M$ is strongly $F$-inverse.
In particular, such is any inverse monoid of the form $\Inv\pre{X}{e_i=f_i\; (i\in I)}$ such that $e_i,f_i\in\ol{X}^*$ are Dyck words, and so
the one-relator special inverse monoid $\Inv\pre{X}{e=1}$ for an arbitrary Dyck word $e\in \ol{X}^*$.
\end{pro}

\begin{proof}
By the given conditions we have $M_{sF}(M/\sigma,X)=M_{sF}(FG(X),X)=FIM(X)$, whence the proposition follows from Corollary \ref{cor:hom}.
\end{proof}

Therefore, we are prompted to pose an even more general question than the one before.

\begin{prb}
Characterise the one-relator special inverse monoids $\Inv\pre{X}{w=1}$ with the (strong) $F$-inverse property.
\end{prb}


\small
\begin{ackn}
The research of the first named author is supported by the Personal Grant F-121 ``Problems of combinatorial semigroup and group theory'' 
of the Serbian Academy of Sciences and Arts. 
The research of the second named author was supported by the Slovenian Research and Innovation Agency grants P1-0288 and J1-60025.
A big part of this research was conducted during the first author's two visits to the Institute of Mathematics, Physics and Mechanics (IMFM), 
Ljubljana, Slovenia, in November 2024 and February 2026, which was partially supported by the Slovenian Research and Innovation Agency grant P1-0288 and J1-60025.
\end{ackn}
\normalsize


\end{document}